\def\eu{\mathfrak}
\def\ma{\mathbb}
\def\p{{\mathcal P}_{\infty}}
\def\pp{{\eu p}_{\infty}}
\def\fin{\hfill\qed\bigskip}
\newcommand{\Irr}{\operatorname{Irr}}
\newcommand{\Gal}{\operatorname{Gal}}
\newcommand{\ex}{\operatorname{ex}}
\newcommand{\mcm}{\operatorname{lcm}}
\newcommand{\DrinT}{\operatorname{DrinT}}
\newcommand{\tor}{\operatorname{tor}}
\newcommand{\Hom}{\operatorname{Hom}}
\newcommand{\car}{\operatorname{char}}
\newcommand{\an}{\operatorname{an}}
\newcommand{\cog}{\operatorname{Drincog}}
\newcommand{\sgn}{\operatorname{sgn}}
\newcommand{\Pic}{\operatorname{Pic}}
\DeclareMathOperator{\grado}{deg}
\DeclareMathOperator{\cogalois}{Drincog}
\DeclareMathOperator{\cogaloistor}{Drincog}
\DeclareMathOperator{\grugal}{Gal}
\newcounter{bean}
\def\las{\begin{list}
	{{\rm {(\arabic{bean})}}}{\usecounter{bean}
	\setlength{\rightmargin}{\leftmargin}}}
\newtheorem{teorema}{Theorem}[section]
\newtheorem{proposition}[teorema]{Proposition}
\newtheorem{lema}[teorema]{Lemma}
\newtheorem{corollary}[teorema]{Corollary}
\theoremstyle{definition}
\newtheorem{ejemplo}[teorema]{Example}
\newtheorem{definicion}[teorema]{Definition}
\newtheorem{observacion}[teorema]{Remark}
\title[Cogalois theory and Drinfeld modules]
{Cogalois theory and Drinfeld modules}
\author[M. S\'anchez]{Marco Antonio S\'anchez--Mirafuentes}
\address{Departamento de Control Autom\'atico\\
Centro de Investigaci\'on y de Estudios Avanzados del I.P.N.}
\email{kmasm1969@yahoo.com.mx}
\author[J.C. Salas]{Julio Cesar Salas--Torres}
\address{Departamento de Control Autom\'atico\\
Centro de Investigaci\'on y de Estudios Avanzados del I.P.N.}
\email{jcstorres88@hotmail.com, torres1jcesar0@gmail.com}
\author[G. Villa]{Gabriel Villa--Salvador}
\address{Departamento de Control Autom\'atico\\
Centro de Investigaci\'on y de Estudios Avanzados del I.P.N.}
\email{gvillasalvador@gmail.com, gvilla@ctrl.cinvestav.mx}
\subjclass[2010]{Primary 1R60; Secoundary 11R18, 11R32, 11R58}
\keywords{Drinfeld modules; cogalois theory; torsion in Drinfeld modules;
cyclotomic function fields; cogalois groups}
\date{May 9th., 2016}
\begin{document}

\begin{abstract}

In this paper we generalize the results of
\cite{sanchez} to rank one Drinfeld modules with
class number one. We show that, in the present form,
there does not exist a cogalois theory for Drinfeld modules
of rank or class number larger than one. We also consider
the torsion of the Carlitz module for the extension
${\mathbb{F}}_{q}(T)(\Lambda_{P^{n}})
/{\mathbb{F}}_{q}(T)(\Lambda_{P})$.

\end{abstract}

\maketitle

\section{Introduction}\label{S1}

The main goal of this paper is to obtain the analogue of the
classic cogalois group. The {\em cogalois group} of an 
arbitrary field extension $L/K$ is defined as the torsion group
$\tor(L^{\ast}/K^{\ast})$ (see \cite{greither}). The analogue
for Drinfeld modules we are interested in is obtained by
replacing the multiplicative structure of the field by the one given by
the Drinfeld module structure. We see that when $\rho$ is a
rank one $A$--Drinfeld module where $A$ is 
of class number one, the results of \cite{sanchez} can be
obtained also in this case. However, we will see that there is
no cogalois theory for arbitrary $A$--Drinfeld modules of 
rank one.

\section{Preliminaries and notations}\label{S2}

We consider function fields $K/{\mathbb{F}}_{q}$ where
we fix a prime divisor denoted by $\pp$. $A$ denotes
the Dedekind
ring consisting of the elements $u\in K$ such that
the only possible pole of $u$ is $\pp$.

We will use the following notation along the paper.

\begin{dinglist}{253}

\item $k={\ma F}_q(T)$ denotes the rational function field
over a finite field of $q$ elements ${\ma F}_q$.

\item $R_T={\ma F}_q[T]$ denotes the polynomial ring
over $T$ and such that $k$ is the quotient field of $R_T$.

\item $K$ is a global function field over ${\ma F}_q$.

\item ${\mathrm C}$ denotes the Carlitz module.

\item $\Lambda_M=\{u\in \bar{k}\mid 
{\mathrm C}_M(u)=u^M=0\}$ with $M\in R_T$.

\item $\pp$ is a fixed place of $K$ called the {\em infinite prime}  of $K$.

\item $d_{\infty}=\deg \pp$ denotes the degree of $\pp$.

\item $A=\{x\in K\mid v_{\eu p}(x)\geq 0\text{\ for every place\ }
{\eu p}\neq \pp\}$.

\item $h_K=|C_{K,0}|$ denotes the class number of $K$.

\item $h_A=d_{\infty}h_K$ is the class number of the Dedekind ring $A$.

\item ${\ma C}_{\infty}={\ma C}_p$ denotes the completion of an algebraic
closure of the completion $K_{\infty}$ of $K$ at $\pp$.

\item $\rho\colon A\to E\langle \tau\rangle$ is an $A$--Drinfeld module
of generic characteristic defined over a field extension $E$ of the
field of definition $K_{\rho}$ of $\rho$.

\item $\rho[I]=\{u\in \bar{K}\mid \rho_c(u)=0\text{\ for all\ } c\in I\}$
where $I$ is an ideal of $A$.

\item $\rho[a]=\rho[(a)]$ for $a\in A$.

\item For a nonzero ideal ${\eu m}$ of $A$ we let $\Phi({\eu m})=
\big|\big(A/{\eu m}\big)^{\ast}\big|$.

\item $\mu_{\rho}(L)=\mu(L)=\{u\in L\mid \rho_a(u)=0 \text{\ for\ }
a\in A\setminus\{0\}\}$ denotes the torsion of a Drinfeld module
of an extension $L$ of $K$.

\end{dinglist}

For the particular case $h_A=1$, necessarily we have
$d_{\infty}=\deg \pp=1$ and $h_{K}=1$.
Therefore there exist only $5$ such fields and rings $A$
according to the classification of function fields with
class number one. In this situation, we may and we will
assume that $K_{\rho}=K$.
We also ask whether the structural map of the Drinfeld module
$\rho$, $\delta:A\rightarrow E$, is the natural embedding. 
The Drinfeld modules under consideration will be of rank one,
unless otherwise specified. So, we have $\deg (\rho_{a})=
-d_{\infty}v_{\pp}(a)=\deg a$.

We have that $|A/(a)|=q^{\deg a}$ is finite, and
$\deg a=\dim_{{\ma F}_q} A/(a)$. 
If necessary, we will assume for a rank one
Drinfeld module $\rho$ that $K_{\rho}=H_A$ is the
Hilbert class field (see \cite[\S 15]{Hay92}). Let
$H_A^+$ be the {\em normalizer field} for $A$--Drinfeld
modules over $K$, $\pp$ for a fixed sign function $\sgn$.
We have that $H_A^+$ is the narrow Hilbert class field
with respect to $\sgn$. We know that $H_A^+/K$
is an abelian extension with Galois group isomorphic to
$\Pic^+ A=M_A/P_A^+$ where $M_A$ is the group 
of fractional ideals of $A$ and $P_A^+=\{xA\mid x\in K^{\ast},
\sgn(x)=1\}$. We have $|\Pic_A^+|=[H_A^+:K]=\frac{q^{d_{\infty}}-1}
{q-1}h_A$ (see \cite[Theorem 14.7]{Hay92}, 
\cite[Theorem 13.5.30]{Vil2006}).

If $\rho\colon A\to {\ma C}_{\infty}\langle\tau\rangle$ is a 
rank one $A$--Drinfeld module, then $\rho=\rho^{\Gamma}$
where $\Gamma=A\bar{\pi}$ is a lattice with
$\bar{\pi}\in {\ma C}_{\infty}\setminus \{0\}$ 
and the exponential function associated to $\rho$
is given by $\ex_{\Gamma}(x)=x\prod_{\gamma\in\Gamma\setminus\{0\}}
\big(1-\frac{x}{\gamma}\big)$. Thus $\ex_{\Gamma}(\gamma)=0$
if and only if $\gamma\in \Gamma$. We define $\lambda_a:=\ex_{\Gamma}
\big(\frac{\bar{\pi}}{a}\big)$ for $a\in A\setminus\{0\}$. 
From the functional equation $\ex_{\Gamma}(au)=\rho_a(\ex_{\Gamma}(u))$
we obtain that $\rho_a(\lambda_a)=0$. Further $\rho_m(\lambda_{mn})=
\lambda_n$ for $n,m\in A$.

\begin{observacion}\label{O3.9}
We have that $\lambda_a$ is a generator of the
$A$--module $\rho[a]$.
\end{observacion}

\section{General results on Drinfeld modules}\label{S3}

The following results will be used along the paper.

\begin{proposition}\label{P3.1}
Let $\rho$ be an $A$--Drinfeld module of rank one and let
$a\in A$ be nonzero. Then $K(\rho[a])/K$ is an abelian
extension and $\Gal(K(\rho[a])/K)$ is isomorphic to a 
subgroup of the group $(A/(a))^{*}$. \fin
\end{proposition}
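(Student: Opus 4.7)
The plan is to realize $K(\rho[a])$ as the splitting field of a separable additive polynomial and then use the $A$-module structure of the torsion to produce the embedding.

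First I would show that $K(\rho[a])/K$ is Galois. Since $\rho$ has generic characteristic and $a\neq 0$, the polynomial $\rho_a(X)\in K[X]$ is a nonzero additive polynomial whose linear coefficient $\delta(a)$ is a unit, so $\rho_a(X)$ is separable. Its set of roots is exactly $\rho[a]$, so $K(\rho[a])$ is the splitting field of a separable polynomial over $K$, hence $K(\rho[a])/K$ is a (finite) Galois extension.

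Next I would construct the map $\Psi\colon \Gal(K(\rho[a])/K)\to (A/(a))^{\ast}$. For $\sigma\in\Gal(K(\rho[a])/K)$ and $u\in\rho[a]$, the identity $\sigma(\rho_c(u))=\rho_c(\sigma(u))$ for all $c\in A$ (which holds because $\rho_c$ has coefficients fixed by $\sigma$) shows that $\sigma$ restricts to an $A$-module automorphism of $\rho[a]$. Since $\rho$ has rank one, $\rho[a]$ is a free $A/(a)$-module of rank $1$; in fact by Remark~\ref{O3.9} the element $\lambda_a$ is an $A/(a)$-generator. Hence $\operatorname{Aut}_{A/(a)}(\rho[a])\cong (A/(a))^{\ast}$, and we define $\Psi(\sigma)$ to be the unique $c_\sigma+(a)\in (A/(a))^{\ast}$ with $\sigma(\lambda_a)=\rho_{c_\sigma}(\lambda_a)$. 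The map $\Psi$ is clearly a homomorphism.

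Finally I would check injectivity. If $\Psi(\sigma)=1$, then $\sigma(\lambda_a)=\lambda_a$. But every element of $\rho[a]$ has the form $\rho_c(\lambda_a)$ for some $c\in A$, so $\sigma$ fixes all of $\rho[a]$, and since $K(\rho[a])$ is generated over $K$ by $\rho[a]$ (indeed by $\lambda_a$ alone), this forces $\sigma=\operatorname{id}$. Therefore $\Psi$ is an injective homomorphism into the abelian group $(A/(a))^{\ast}$, from which both assertions of the proposition follow.

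The only nontrivial input is the cyclicity of $\rho[a]$ as an $A/(a)$-module, which is precisely Remark~\ref{O3.9}; everything else is formal consequence of the definition of a Drinfeld module together with the separability of $\rho_a$ in generic characteristic.
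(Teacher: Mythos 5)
Your proof is correct and is the standard argument; note that the paper itself supplies no proof of Proposition~\ref{P3.1} (it is stated with a closing box as a known fact from the theory of rank one Drinfeld modules), so there is nothing to diverge from. The one hypothesis you use implicitly is that the coefficients of each $\rho_c$ lie in $K$ (i.e.\ $K_\rho\subseteq K$), which is what makes $\sigma$ commute with $\rho_c$ and hence act $A$-linearly on $\rho[a]$; this is consistent with the paper's standing conventions, but it is worth stating explicitly since for general $A$ the torsion field is usually taken over $H_A$ or $H_A^+$ rather than over $K$.
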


\begin{proposition}\label{P3.2} Let $\eu P$ be a nonzero
prime ideal of $A$. Let $m\in{\ma N}$ and $K({\eu P}^m):=
H_A^+(\rho[{\eu P}^m])$.
Then the extension $K({\eu P}^m)=H_A^+(\rho[{\eu P}^m])/
H_A^+$ is totally ramified in ${\eu F}$, where ${\eu F}$
is the prime divisor of $H_A^+$ above ${\eu P}$
and the ramification index is equal to $\Phi({\eu P}^m)$.
Furthermore, the extension $K({\eu P}^m)/H_A^+$
is unramified at every prime divisor other than 
${\eu P}$ and the primes above $\pp$. We also
have $[K({\eu P}^m):H_A^+]= \Phi({\eu P}^m)$.

Finally, $\pp$ decomposes fully in $H_A/K$
and is totally ramified in $H_A^+/H_A$.
\end{proposition}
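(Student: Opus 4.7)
The plan is to reduce the ramification claims to a local analysis at the prime ${\eu F}$ of $H_A^+$ above ${\eu P}$ via an Eisenstein-type argument, and to deduce the claims about $\pp$ from the defining properties of $H_A$ and $H_A^+$.

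First, by Proposition~\ref{P3.1} applied to $\rho$ over $H_A^+$, there is an embedding $\Gal(K({\eu P}^m)/H_A^+)\hookrightarrow (A/{\eu P}^m)^{\ast}$, so $[K({\eu P}^m):H_A^+]\le\Phi({\eu P}^m)$. To show equality, and that this extension is totally ramified at ${\eu F}$, I would work in the completion of $H_A^+$ at ${\eu F}$. Because $\rho$ is sign-normalized it has good reduction at ${\eu F}$, and its reduction $\bar{\rho}$ modulo ${\eu F}$ is a rank-one Drinfeld module over the residue field whose ${\eu P}$-operator is $\tau^{\deg{\eu P}}$. Writing $\rho_{{\eu P}}(X)$ for the sign-normalized additive polynomial generating the left ideal $\sum_{a\in{\eu P}}H_A^+\langle\tau\rangle\rho_a$, one then checks that $\rho_{{\eu P}}(X)/X$ is Eisenstein at ${\eu F}$: the constant term is a uniformizer of ${\eu F}$, the leading coefficient is a unit, and all intermediate coefficients lie in ${\eu F}$. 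Hence $\lambda_{{\eu P}}$ generates at ${\eu F}$ a totally ramified extension of degree $\Phi({\eu P})=q^{\deg{\eu P}}-1$. Iterating the relation $\rho_{{\eu P}}(\lambda_{{\eu P}^j})=\lambda_{{\eu P}^{j-1}}$ from Remark~\ref{O3.9} yields a tower of totally ramified Eisenstein extensions whose composite has ramification index $\Phi({\eu P}^m)$; combined with the upper bound, this forces equality of degrees and total ramification throughout.

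For a prime ${\eu Q}$ of $H_A^+$ over a prime of $A$ different from ${\eu P}$, the linear coefficient of $\rho_{{\eu P}^m}$ is a generator of the (locally principal) ideal ${\eu P}^m\cdot O_{H_A^+,{\eu Q}}$, which is a unit at ${\eu Q}$; hence the reduction of $\rho_{{\eu P}^m}$ modulo ${\eu Q}$ is separable, $\rho[{\eu P}^m]$ lifts uniquely from the torsion of $\bar{\rho}$, and $K({\eu P}^m)/H_A^+$ is unramified at ${\eu Q}$. The last claim about $\pp$ follows from the definitions: $H_A/K$ is the maximal unramified abelian extension of $K$ in which $\pp$ splits completely, while $H_A^+/H_A$ is cyclic of degree $(q^{d_{\infty}}-1)/(q-1)$ and totally ramified at every prime above $\pp$, by \cite[Thm.~14.7]{Hay92}.

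The principal obstacle is the Eisenstein verification at ${\eu F}$, specifically checking that the constant term of $\rho_{{\eu P}}(X)/X$ is exactly a uniformizer rather than an element of higher valuation. This hinges on the interplay between sign-normalization and the reduction of $\rho$ at ${\eu F}$, and is the step where the rank-one hypothesis plays an essential role.
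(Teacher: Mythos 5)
Your sketch is correct and follows the same route as the paper, which proves this proposition simply by citing \cite[Proposition 14.4, Theorem 15.6]{Hay92} and \cite[Proposition 13.5.41, Theorem 13.5.35]{Vil2006}; the arguments in those references are exactly the ones you outline (the Eisenstein property of $\rho_{\eu P}(X)/X$ at ${\eu F}$ for a sign-normalized module, the tower $\rho_{\eu P}(\lambda_{{\eu P}^j})=\lambda_{{\eu P}^{j-1}}$ combined with the degree bound from the embedding into $(A/{\eu P}^m)^{\ast}$, good reduction away from ${\eu P}$, and the defining properties of $H_A$ and $H_A^+$ at $\pp$). The one step you flag as delicate --- that the $\tau^0$-coefficient of $\rho_{\eu P}$ generates ${\eu P}\cdot O_{H_A^+}$, hence is a uniformizer at ${\eu F}$ since ${\eu P}$ is unramified in $H_A^+/K$ --- is precisely the lemma supplied by Hayes' theory of sign-normalized modules, so no gap remains.
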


\begin{proof}
See \cite[Proposition 14.4, Theorem 15.6]{Hay92},
\cite[Proposition 13.5.41, Theorem 13.5.35]{Vil2006}.
\end{proof}

\begin{corollary}\label{C3.4}
For any nonzero ideal $\eu m$ of $A$, $K({\eu m}):=
H_A^+(\rho[{\eu m}])/H_A^+$ is a Galois extension with
Galois group isomorphic to $\big(A/{\eu m}\big)^{\ast}$.
The ramified finite primes are precisely the prime
ideals $\eu P$ dividing $\eu m$ with ramification index
equal to $\Phi({\eu P}^e)$ where ${\eu P}^e$ 
is the exact power of $\eu P$ that divides $\eu m$.
\end{corollary}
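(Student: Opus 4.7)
The plan is to reduce to the prime-power case handled by Proposition \ref{P3.2} via the prime factorization of $\eu m$, and then glue using a linear-disjointness argument based on ramification.

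First, I would write $\eu m=\prod_{i=1}^{r}{\eu P}_i^{e_i}$, the factorization into distinct prime ideals. Since $\rho[\eu m]$ is a cyclic $A/\eu m$--module (as $\rho$ has rank one) and $A/\eu m\cong\prod_{i}A/{\eu P}_i^{e_i}$ by the Chinese Remainder Theorem, we have $\rho[\eu m]=\bigoplus_i\rho[{\eu P}_i^{e_i}]$, and therefore
\[
K(\eu m)=H_A^+(\rho[\eu m])=K({\eu P}_1^{e_1})\cdots K({\eu P}_r^{e_r}).
\]
Each factor $K({\eu P}_i^{e_i})/H_A^+$ is Galois (in fact abelian) by Proposition \ref{P3.1} applied over $H_A^+$, so the compositum is Galois over $H_A^+$. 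The action on the torsion module embeds $\Gal(K(\eu m)/H_A^+)$ into $\mathrm{Aut}_{A}(\rho[\eu m])\cong (A/\eu m)^{\ast}$, so it remains to check that this injection is surjective, equivalently that $[K(\eu m):H_A^+]=\Phi(\eu m)=\prod_i\Phi({\eu P}_i^{e_i})$.

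Next I would prove that the fields $K({\eu P}_i^{e_i})$ are pairwise linearly disjoint over $H_A^+$ by induction on $r$, using ramification as the separating invariant. Fix $i$ and let $L_i'$ be the compositum of the $K({\eu P}_j^{e_j})$ for $j\neq i$. By Proposition \ref{P3.2}, the prime ${\eu F}_i$ of $H_A^+$ above ${\eu P}_i$ is unramified in every $K({\eu P}_j^{e_j})$ with $j\neq i$, hence unramified in $L_i'$. On the other hand it is \emph{totally} ramified in $K({\eu P}_i^{e_i})/H_A^+$, so every intermediate field of $K({\eu P}_i^{e_i})/H_A^+$ is also totally ramified at ${\eu F}_i$. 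Therefore $K({\eu P}_i^{e_i})\cap L_i'=H_A^+$, which gives the linear disjointness and, inductively, $[K(\eu m):H_A^+]=\prod_i\Phi({\eu P}_i^{e_i})=\Phi(\eu m)$. Combined with the embedding into $(A/\eu m)^{\ast}$, this yields the isomorphism $\Gal(K(\eu m)/H_A^+)\cong (A/\eu m)^{\ast}$.

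Finally, for the ramification statement, finite primes not dividing $\eu m$ are unramified in every $K({\eu P}_i^{e_i})/H_A^+$ by Proposition \ref{P3.2}, so they are unramified in the compositum. For ${\eu F}_i$ above ${\eu P}_i$, the isomorphism $\Gal(K(\eu m)/H_A^+)\cong \Gal(K({\eu P}_i^{e_i})/H_A^+)\times\Gal(L_i'/H_A^+)$ coming from linear disjointness projects the inertia subgroup of a prime of $K(\eu m)$ above ${\eu F}_i$ onto the full inertia in $K({\eu P}_i^{e_i})/H_A^+$ (which is the whole Galois group, of order $\Phi({\eu P}_i^{e_i})$) and trivially into $\Gal(L_i'/H_A^+)$. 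Hence the ramification index of ${\eu F}_i$ in $K(\eu m)/H_A^+$ equals $\Phi({\eu P}_i^{e_i})$. The main technical step is the linear-disjointness argument; everything else is either bookkeeping via CRT or a direct application of Proposition \ref{P3.2}.
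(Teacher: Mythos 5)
Your argument is correct. Note, though, that the paper does not actually prove this corollary: its ``proof'' is a bare citation to Hayes, \S 16, and to Villa--Salvador, Corollary 13.5.42, and the argument you give --- decompose $\rho[{\eu m}]$ via the Chinese Remainder Theorem into the $\rho[{\eu P}_i^{e_i}]$, so that $K({\eu m})$ is the compositum of the $K({\eu P}_i^{e_i})$, then force linear disjointness over $H_A^+$ from the fact that ${\eu F}_i$ is totally ramified in $K({\eu P}_i^{e_i})/H_A^+$ but unramified in the compositum of the other factors, and finally read off the inertia groups from the resulting product decomposition of the Galois group --- is essentially the standard proof found in those references. The one imprecision is your appeal to Proposition \ref{P3.1}: as stated it concerns $K(\rho[a])/K$ for $a\in A$ and principal torsion, not $H_A^+(\rho[{\eu P}_i^{e_i}])/H_A^+$ for a possibly non-principal ideal. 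Nothing is lost, since Galois-ness of $K({\eu P}_i^{e_i})/H_A^+$ follows directly: the coefficients of $\rho_c$ for $c\in{\eu P}_i^{e_i}$ lie in $H_A^+$ (for a sgn-normalized module), so any embedding over $H_A^+$ permutes $\rho[{\eu P}_i^{e_i}]$, and separability holds because $\rho$ has generic characteristic; the embedding of the Galois group into $\operatorname{Aut}_A(\rho[{\eu m}])\cong (A/{\eu m})^{\ast}$ then works exactly as you describe. With that small repair your proof is complete.
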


\begin{proof}
\cite[\S 16]{Hay92}, \cite[Corollary 13.5.42]{Vil2006}.
\end{proof}

\begin{teorema}\label{T3.6} If $A$ is arbitrary and $\rho$ 
is any $A$--Drinfeld module, then $\mu_{\rho}(L)$ is a
finite set for any finite extension $L$ of $K$.
\end{teorema}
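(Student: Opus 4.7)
The plan is to prove finiteness of $\mu_\rho(L)$ by reduction modulo two distinct primes of good reduction, the standard strategy mirroring the proof that the torsion of an abelian variety over a global field is finite. Since $\mu_\rho(L)$ only grows when $L$ is enlarged, I am free to assume $\rho$ is defined over $L$ itself. The first step is to invoke the basic fact that a Drinfeld module has good reduction at all but finitely many places of its field of definition, and to choose two distinct prime ideals ${\eu P}_1, {\eu P}_2$ of $A$, both different from $\pp$, together with places ${\eu L}_1,{\eu L}_2$ of $L$ above them at which $\rho$ has good reduction. Each residue field $\kappa_i$ is finite, and I denote by $\bar{\rho}^{(i)}\colon A \to \kappa_i\langle\tau\rangle$ the reduced Drinfeld module.

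The heart of the proof is the claim that the reduction map is injective on the subgroup of $\mu_\rho(L)$ annihilated by some $a \in A$ coprime to ${\eu P}_i$. Using a good-reduction integral model, write $\rho_a = \delta(a) + c_1 \tau + c_2 \tau^2 + \cdots$ with every $c_j$ integral at ${\eu L}_i$. Since $(a)$ is coprime to ${\eu P}_i$, the element $\delta(a)$ is a unit at ${\eu L}_i$; a direct valuation computation then shows that for any $u$ with $v_{{\eu L}_i}(u) > 0$ one has $v_{{\eu L}_i}(\rho_a(u)) = v_{{\eu L}_i}(u) < \infty$, contradicting $\rho_a(u) = 0$ unless $u = 0$. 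Hence this subgroup embeds into the finite set $\bar{\rho}^{(i)}(\kappa_i)$ and is therefore finite.

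To conclude, decompose the $A$-torsion module as $\mu_\rho(L) = \bigoplus_{\eu Q} \mu_\rho(L)[{\eu Q}^{\infty}]$ into ${\eu Q}$-primary components, observing that each primary part is annihilated by an element of $A$ coprime to any prescribed prime (using the fact that every ideal class contains ideals avoiding any finite set of primes). The previous step applied with $i = 1$ shows that the direct sum over all $\eu Q \neq {\eu P}_1$ is finite, and applied with $i = 2$ shows that the ${\eu P}_1$-primary component is finite as well. Thus $\mu_\rho(L)$ is a finite direct sum of finite groups, and therefore finite.

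The main obstacle is validating the reduction machinery underlying the second step: one needs the existence of a good-reduction integral model at each chosen place, injectivity of reduction on prime-to-${\eu P}_i$ torsion, and the finiteness of the set of primes of bad reduction. All three are standard in the theory of Drinfeld modules over local rings but are not recalled in the preliminaries, and must be imported from an external reference such as Goss's book or Drinfeld's original article.
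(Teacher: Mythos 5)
Your argument is sound, but it follows a genuinely different route from the paper's. The paper does not prove Theorem \ref{T3.6} internally at all: the proof is a citation to \cite{denis, GhiHsi2013, Poo97, Schweizer}, and Remark \ref{O3.7} sketches the two arguments found there, namely Denis's proof via canonical heights (torsion points are exactly the points of height zero, and points of bounded height are finite in number) and Poonen's proof, which first restricts $\rho$ along $R_T\hookrightarrow A$ to reduce to the case $A=R_T$ and then works at a \emph{single} finite place ${\eu p}\neq\pp$, showing $v_{\eu p}$ is bounded below on $\mu_{\rho}(L)$ so that its closure in the completion is compact and discrete, hence finite. You instead run the classical two-places-of-good-reduction argument: injectivity of reduction on prime-to-${\eu P}_i$ torsion combined with the primary decomposition of a torsion module over a Dedekind domain. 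That is correct and buys something the cited arguments do not state, namely the explicit bound $|\mu_{\rho}(L)|\leq |\kappa_1|\,|\kappa_2|$; the price is the good-reduction machinery (integral models at almost all places, reduction of Drinfeld modules), which you rightly flag as imported and which the paper's preliminaries do not develop, whereas Poonen's local argument needs no good-reduction hypothesis at the chosen place.

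Two points you should make explicit to close the argument. First, before reducing a torsion point you must know it lies in the valuation ring at ${\eu L}_i$; this follows from the same valuation computation, since if $v_{{\eu L}_i}(u)<0$ and the top coefficient of $\rho_a$ is a unit then $v_{{\eu L}_i}(\rho_a(u))=q^{d}\,v_{{\eu L}_i}(u)\neq\infty$ where $d=\deg_{\tau}\rho_a$. Second, injectivity should be stated on the entire prime-to-${\eu P}_1$ submodule at once (the difference of two such torsion points is again annihilated by an element outside ${\eu P}_1$, e.g.\ any $a\in\prod_j{\eu Q}_j^{n_j}\setminus{\eu P}_1$), since that is what bounds the full direct sum over ${\eu Q}\neq{\eu P}_1$ by $|\kappa_1|$ rather than bounding each $a$-torsion piece separately.
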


\begin{proof} \cite{denis, GhiHsi2013, Poo97, Schweizer}.
\end{proof}

\begin{observacion}\label{O3.7}
The first proof of Theorem \ref{T3.6} was given by
Denis in \cite[Th\'eor\`eme 1]{denis}, where he proves
that the number of elements with height bounded
by a fixed real number $D$, is finite and that the torsion
elements are precisely the elements of height $0$.
To show only that the torsion is finite, the proof can be
reduced to the case $A=R_T$ (\cite[Proposition 1]{Poo97},
\cite[Remark 2.8]{GhiHsi2013}) as follows: if $\rho\colon A\to E\langle
\tau\rangle$ is an $A$--Drinfeld module over $E$, 
we choose $T\in K$ such that the pole divisor of $T$ is $\pp^n$
for some $n\geq 1$. Then $A$ is the integral closure of $R_T$ in $K$
and $\rho^{\prime}=\rho\circ i\colon R_T\to E\langle\tau\rangle$ is
an $R_T$--Drinfeld module over $E$ where $i$ is the natural
embedding. Then $\mu_{\rho^{\prime}}(L)=\mu_{\rho}(L)$, being
the fact $\mu_{\rho^{\prime}}(L)\subseteq \mu_{\rho}(L)$
clear. Now, if $x\in \mu_{\rho}(L)$, then $\rho_a(x)=0$
with $a\in A\setminus\{0\}$. Consider 
$\alpha_0,\ldots,\alpha_{n-1}\in R_T$, $\alpha_0\neq 0$
and $\alpha_0+\alpha_1 a+\cdots+\alpha_{n-1}a^{n-1}+a^n=0$,
so $\rho_{\alpha_0}(x)=0$ and thus $x\in\mu_{\rho^{\prime}}(L)$.

Once we have reduced to the case $R_T$, the proof of the finiteness
of $\mu_{\rho}(L)$ can be obtained over a finite extension $L$ of
$K_{\eu p}$, where $K_{\eu p}$  is the completion of $K$ at
${\eu p}\neq \pp$ proving that if $x\in\mu_{\rho}(L)$, then
$v_{\eu p}(x)\geq c$ for some $c$ and therefore
$\overline{\mu_{\rho}(L)}$ is a compact discrete set, hence
finite (\cite[Proposition 1]{Poo97}).

In the particular case of rank one over $R_T$ and $A$ with
$h_A=1$, we have $\mu_{\rho}(L)=\rho[a]$ for some $a\in
A\setminus \{0\}$. In particular, $\mu_{\mathrm C}(L)=\Lambda_M$ for
some $M\in R_T \setminus\{0\}$ in the Carlitz module case.
This is a very particular case of the general case of Drinfeld
modules of rank one.
\end{observacion}

\begin{proposition}\label{P3.8} Let $L/E$ be a finite extension
and let $\rho$ be a Drinfeld module of rank one. Then there
exists an ideal $\mathfrak{b}$ of $A$ such that
$\mu_{\rho}(L)=\rho[\mathfrak{b}]$. In particular, if $\rho={\mathrm C}$
is the Carlitz module, $\mu_{\mathrm C}(L)=\Lambda_M$ for some
$M\in R_T$.
\end{proposition}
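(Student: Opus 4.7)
The plan is to take $\mathfrak{b}$ to be the $A$-annihilator of $\mu_\rho(L)$ and then to show that $\rho[\mathfrak{b}]$ contains no extra elements, exploiting the rank-one hypothesis through the structural isomorphism $\rho[\mathfrak{b}]\cong A/\mathfrak{b}$.

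First I would invoke Theorem~\ref{T3.6} to conclude that $\mu_\rho(L)$ is finite, and then set
\[
\mathfrak{b}:=\{a\in A:\rho_a(u)=0\text{ for every }u\in\mu_\rho(L)\}=\bigcap_{u\in\mu_\rho(L)}\mathrm{Ann}_A(u).
\]
Each $\mathrm{Ann}_A(u)$ is a nonzero ideal of the Dedekind domain $A$ (since every $u\in\mu_\rho(L)$ is torsion), so $\mathfrak{b}$ is a nonzero ideal of $A$: being a finite intersection of nonzero ideals, it contains their nonzero product. By construction, $\mu_\rho(L)\subseteq\rho[\mathfrak{b}]$.

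The heart of the proof is the reverse inclusion, and this is where rank one enters essentially: it gives $\rho[\mathfrak{b}]\cong A/\mathfrak{b}$ as $A$-modules. Any $A$-submodule of $A/\mathfrak{b}$ has the form $\mathfrak{c}/\mathfrak{b}$ for a unique ideal $\mathfrak{c}$ with $\mathfrak{b}\subseteq\mathfrak{c}\subseteq A$, with annihilator the colon ideal $(\mathfrak{b}:\mathfrak{c})=\mathfrak{b}\mathfrak{c}^{-1}$ (using that $A$ is Dedekind and $\mathfrak{c}$ invertible, so $\mathfrak{b}\mathfrak{c}^{-1}$ is an integral ideal of $A$). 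Translating $\mu_\rho(L)$ through this correspondence to some $\mathfrak{c}/\mathfrak{b}$, the defining property of $\mathfrak{b}$ forces $\mathfrak{b}\mathfrak{c}^{-1}=\mathfrak{b}$, hence $\mathfrak{c}=A$, so $\mu_\rho(L)=\rho[\mathfrak{b}]$.

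For the Carlitz case, $A=R_T$ is a principal ideal domain, so $\mathfrak{b}=(M)$ for some $M\in R_T$, giving $\rho[\mathfrak{b}]={\mathrm C}[M]=\Lambda_M$. The main obstacle is this reverse inclusion step, which relies essentially on rank one: for rank $r\geq 2$ one has $\rho[\mathfrak{b}]\cong(A/\mathfrak{b})^r$, which admits many proper submodules with full annihilator $\mathfrak{b}$, consistent with the paper's announced failure of a cogalois theory in higher rank.
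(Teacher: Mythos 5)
Your proof is correct, and on the crucial reverse inclusion it takes a genuinely different route from the paper's. Both arguments begin the same way: finiteness of $\mu_{\rho}(L)$ (Theorem \ref{T3.6}) yields a nonzero ideal $\mathfrak{b}=\bigcap_{i}\operatorname{Ann}_A(x_i)$ with $\mu_{\rho}(L)\subseteq\rho[\mathfrak{b}]$, and both use rank one only through the isomorphism $\rho[\mathfrak{b}]\cong A/\mathfrak{b}$. For the containment $\rho[\mathfrak{b}]\subseteq\mu_{\rho}(L)$ the paper argues in two concrete steps: first, for a single torsion point $x$ with annihilator $\mathfrak{a}$, the orbit $A\cdot x\subseteq L$ has cardinality $|A/\mathfrak{a}|=|\rho[\mathfrak{a}]|$, whence $\rho[\mathfrak{a}]=A\cdot x\subseteq\mu_{\rho}(L)$; second, writing $\mathfrak{b}=\mathfrak{a}_i\mathfrak{c}_i$, it shows the $\mathfrak{c}_i$ are coprime, chooses $c_i\in\mathfrak{c}_i$ with $\sum_i c_i=1$, and decomposes an arbitrary $x\in\rho[\mathfrak{b}]$ as $x=\sum_i\rho_{c_i}(x)$ with $\rho_{c_i}(x)\in\rho[\mathfrak{a}_i]$. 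You collapse both steps into one structural observation: $\mu_{\rho}(L)$ is an $A$-submodule of the cyclic module $A/\mathfrak{b}$ whose annihilator is, by construction, exactly $\mathfrak{b}$, and the only submodule $\mathfrak{c}/\mathfrak{b}$ with $(\mathfrak{b}:\mathfrak{c})=\mathfrak{b}\mathfrak{c}^{-1}=\mathfrak{b}$ is the one with $\mathfrak{c}=A$. This is shorter and makes the role of the rank-one hypothesis transparent, as your closing remark about $(A/\mathfrak{b})^{r}$ for $r\geq 2$ correctly isolates. What the paper's longer argument buys is a useful concrete byproduct, namely that the $A$-orbit of any single torsion point already fills out all of $\rho[\operatorname{Ann}_A(x)]$, together with an explicit partition-of-unity decomposition of $\rho[\mathfrak{b}]$ into the pieces $\rho[\mathfrak{a}_i]$; your version trades that explicitness for a cleaner appeal to the ideal-theoretic structure of submodules of $A/\mathfrak{b}$ over a Dedekind domain.
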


\begin{proof}
Let $x\in\mu_{\rho}(L)$ and choose $a\in A$, $a\neq 0$ such that
$\rho_a(x)=0$. Consider the annihilator of $x$: $\an(x):=\{
b\in A\mid \rho_b(x)=0\}={\eu a}$. Then ${\eu a}$ is a nonzero
ideal of $A$. Let $M:=A\cdot x=\{\rho_b(x)\mid b\in A\}$.
We have that $M\subseteq L$ since $L$ is an $A$--module and
$M\cong A/{\eu a}$. On the other hand $\rho[{\eu a}]\cong A/{\eu a}$
because $\rho$ is of rank one.
Clearly we have that $M\subseteq \rho[{\eu a}]$ and since
both sets are of the same cardinality $|A/{\eu a}|$,
it follows that $M=\rho[{\eu a}]$. Therefore $\rho[{\eu a}]\subseteq
\mu_{\rho}(L)$.

Since $\mu_{\rho}(L)$ is finite, we write $\mu_{\rho}(L)=
\{x_1,\ldots,x_m\}$ and let ${\eu a}_i:=\an(x_i)$, $1\leq i\leq m$.
Let ${\eu b}=\mcm\{{\eu a}_i\mid 1\leq i\leq m\}=\cap_{i=1}^m
{\eu a}_i$. Let $\eu c_i$ be an integral ideal such that ${\eu b}={\eu a}_i
{\eu c}_i$, $1\leq i\leq m$. We have $\rho_b(x_i)=0$ for all
$b\in{\eu b}$ and for all $1\leq i\leq m$. It follows that
$\mu_{\rho}(L)\subseteq \rho[{\eu b}]$.

Let now $x\in\rho[{\eu b}]$. We claim that $\{{\eu c}_i\}_{i=1}^m$
are relatively primes, that is, $A=\sum_{i=1}^m {\eu c}_i=\mcm\{
{\eu c}_i\mid 1\leq i\leq m\}$. Otherwise, we would have a nonzero
prime ideal $\eu p$ dividing $\eu c_i$ for all $1\leq i\leq m$.
Let ${\eu d}_i$ be such that ${\eu c}_i={\eu d}_i
{\eu p}$, hence ${\eu b}={\eu a}_i{\eu c}_i={\eu a}_i{\eu d}_i
{\eu p}$ and therefore ${\eu b}{\eu p}^{-1}={\eu a}_i{\eu d}_i$ 
so it follows that ${\eu a}_i$ divides ${\eu b}{\eu p}^{-1}$
for all $i$ from it would follow ${\eu b}\mid {\eu b}{\eu p}^{-1}$
which is absurd. This shows that $\mcm\{{\eu c}_i\mid
1\leq i\leq m\} =A$.

Let $c_i\in{\eu c}_i$, $1\leq i\leq m$, be such that 
$1=\sum_{i=1}^m c_i$. For all $1\leq i\leq m$ and for all
$d\in{\eu a}_i$, we have $dc_i\in{\eu a}_i{\eu c}_i={\eu b}$
so that
\begin{gather*}
\rho_{dc_i}(x)=\rho_d(\rho_{c_i}(x))=\rho_{c_i}(\rho_d(x))=0,\\
\intertext{which implies $\rho_{c_i}(x)\in \rho[{\eu a}_i]\subseteq
\mu_{\rho}(L)$. Therefore}
x=\rho_1(x)=\sum_{i=1}^m \rho_{c_i}(x)\in \mu_{\rho}(L),
\end{gather*}
thus we obtain $\rho[{\eu b}]\subseteq \mu_{\rho}(L)$
and hence $\rho[{\eu b}]=\mu_{\rho}(L)$.
\end{proof}

\begin{proposition}\label{P3.5} 
Let $\rho$ be an $A$--Drinfeld module of rank one over 
$E\subseteq H_A^+$.
Let $q>2$ and $\eu m$ a nonzero ideal of $A$. Consider
the extension $K({\eu m})=H_A^+(\rho[{\eu m}])/H_A^+$.
Then $\mu_{\rho}(K({\eu m})) = \rho[{\eu m}]$.
\end{proposition}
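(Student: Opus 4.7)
My approach is to identify the shape of $\mu_{\rho}(K({\eu m}))$ through Proposition \ref{P3.8} and then pin it down to the ideal ${\eu m}$ using the ramification dictionary of Corollary \ref{C3.4}. To begin, the inclusion $\rho[{\eu m}] \subseteq \mu_{\rho}(K({\eu m}))$ is automatic from $K({\eu m}) = H_A^+(\rho[{\eu m}])$. Applying Proposition \ref{P3.8} to the finite extension $K({\eu m})/K$ produces a nonzero ideal ${\eu b}$ of $A$ with $\mu_{\rho}(K({\eu m})) = \rho[{\eu b}]$, and the inclusion $\rho[{\eu m}] \subseteq \rho[{\eu b}]$, together with the rank one identifications $\rho[{\eu a}] \cong A/{\eu a}$, forces ${\eu m} \mid {\eu b}$. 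Writing ${\eu b} = {\eu m}{\eu c}$ for an integral ideal ${\eu c}$, the entire task reduces to showing ${\eu c} = A$.

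The pivotal intermediate claim is that $K({\eu m}) = K({\eu b})$. One inclusion follows from ${\eu m} \mid {\eu b}$, which gives $\rho[{\eu m}] \subseteq \rho[{\eu b}]$ and hence $K({\eu m}) \subseteq K({\eu b})$. The reverse uses $\rho[{\eu b}] = \mu_{\rho}(K({\eu m})) \subseteq K({\eu m})$: adjoining $\rho[{\eu b}]$ to $H_A^+ \subseteq K({\eu m})$ yields $K({\eu b}) \subseteq K({\eu m})$.

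Assuming ${\eu c} \neq A$ for contradiction, I would pick a prime ${\eu P} \mid {\eu c}$ and set $e = v_{\eu P}({\eu m})$, $f = v_{\eu P}({\eu c}) \geq 1$. By Corollary \ref{C3.4}, the ramification index of ${\eu P}$ in $K({\eu m})/H_A^+$ is $\Phi({\eu P}^e)$ (with the convention that $e=0$ means unramified), while in $K({\eu b})/H_A^+$ it is $\Phi({\eu P}^{e+f})$; the equality $K({\eu m}) = K({\eu b})$ forces these two indices to agree. For $e \geq 1$ the ratio $\Phi({\eu P}^{e+f})/\Phi({\eu P}^e) = q^{f\deg {\eu P}} > 1$ provides an immediate contradiction. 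The delicate case is $e=0$, which demands $\Phi({\eu P}^f) = q^{(f-1)\deg {\eu P}}(q^{\deg {\eu P}}-1) > 1$; this is precisely where the hypothesis $q > 2$ becomes indispensable, as it rules out the lone pathological configuration $q=2$, $\deg {\eu P}=1$, $f=1$, in which $\Phi({\eu P}) = 1$ and $\rho[{\eu P}]$ would already sit inside $H_A^+$, decoupling the torsion growth from the field growth. I expect pinpointing this role of $q>2$ to be the main obstacle; once that degenerate configuration is excluded, ${\eu c} = A$, and $\mu_{\rho}(K({\eu m})) = \rho[{\eu m}]$ follows.
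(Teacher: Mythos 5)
Your proof is correct and follows essentially the same route as the paper's: both reduce the problem to comparing ramification indices of finite primes in extensions of $H_A^+$ via Proposition \ref{P3.2} and Corollary \ref{C3.4}, the only cosmetic difference being that you work with the single ideal ${\eu b}$ furnished by Proposition \ref{P3.8} while the paper argues element-by-element with annihilators $\an_\rho(u)$. Your explicit identification of where $q>2$ is needed (to rule out $\Phi({\eu P})=1$ when $N({\eu P})=2$) is a point the paper leaves implicit.
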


\begin{proof} It suffices to show that $\mu_{\rho}(K({\eu m}))
\subseteq \rho[{\eu m}]$.
If $u\in \mu_{\rho}(K({\eu m}))$, then $\rho_a(u)=0$ for
some $a\in A\setminus\{0\}$. Let $\an_{\rho}(u)={\eu c}\neq 0$.
From Proposition \ref{P3.8} it follows that $\rho[{\eu c}]\subseteq
K({\eu m})$, $K(\rho[{\eu c}])\subseteq K(\rho[{\eu m}])$ and
$K({\eu c})\subseteq K({\eu m})$. From Proposition
\ref{P3.2} it follows, analyzing the ramification
index of each prime in the extensions $K({\eu c})/H_A^+$ and
$K({\eu m})/H_A^+$, that ${\eu c}\mid {\eu m}$ and therefore
$u\in \rho[{\eu c}]\subseteq \rho[{\eu m}]$.
\end{proof}

\section{Radical extensions}\label{S5}

Let $K/{\mathbb{F}}_{q}$ be a function field and let $\pp$
be a fixed prime divisor. Let us consider
$\delta:A\rightarrow E$ the natural embedding and
$\rho:A\rightarrow E\langle\tau\rangle$ a rank one Drinfeld module.
As before, we consider function field extensions $L/K$ 
such that $E\subseteq K\subseteq L\subseteq \overline{k}$.
Since this type of extensions are $E$--algebras, we may
give them an $A$--module structure using the map $\rho$.
The first objet to consider, associated to the extension
$L/K$, is the following:
\[
\DrinT(L/K) = \{u\in L\mid \text{there exists $m\in
A\setminus \{0\}$ such that $\rho_{m}(u) \in K$}\}.
\]

Note that $\DrinT(L/K)\subseteq L$ is a subgroup of the
additive group $L$. On the other hand, $\DrinT(L/K)$ is
an $A$--module and the $A$--module $\DrinT(L/K)/K$
is of $A$--torsion. The module $\cogaloistor(L/K)= \DrinT(L/K)/K$ will be called
the {\em Drinfeld cogalois module}\index{Drinfeld
cogalois module} of the extension $L/K$.
We have that $\cogaloistor(L/K)$
is analogous to the group $T(L/K)/K^{*}$ in a field extension
$L/K$ where $T(L/K)$ denotes the usual torsion group
associated to the extension $T(L/K)$, that is,
$T(L/K)=\{u\in L\mid \text{there exists $n\in {\mathbb{N}}$
such that $u^{n}\in K$}\}$ (see \cite{greither}).

\begin{definicion}\label{definicion1_c2drinfelduno}
We say that an extension $L/K$ is {\em radical}\index{radical
extension} if there exists a set $X\subseteq \DrinT(L/K)$ 
such that $L= K(X)$. We will say that $L/K$ is 
{\em pure}\index{pure extension} if for any irreducible
$m\in A$ and for each $\lambda_{m}\in L$, we have
$\lambda_{m}\in K$. Finally we will say that $L/K$ is
a {\em cyclotomic coradical extension}\index{cyclotomic
coradical extension} if it is radical, separable and pure.

\end{definicion}

\subsection{Reduction to the case $R_T$}\label{S4}

In this subsetion we show how we may reduce the general case to
the case $A=R_T$.

Let $A$ and $K$ be arbitrary. Let $T\in A$ be such that 
${\mathfrak N}_T={\mathfrak p}_\infty^n$, for some $n\geq 1$.
Consider $R_T={\ma F}_q[T]$ and $k={\ma F}_q(T)$.
Let $\rho\colon A\to E\langle\tau\rangle$ be a $A$--Drinfeld 
module of rank $r_{\rho}$. Let $\iota\colon R_T\to A$
be the natural embedding and let $\rho^{\prime}\colon R_T\to E
\langle\tau\rangle$ be given by $\rho^{\prime}=\rho\circ \iota$.
Let $r_{\rho^{\prime}}$ be the rank of $\rho^{\prime}$.
Then

\begin{proposition}\label{P3.1'}
$r_{\rho^{\prime}}=d_{\infty}nr_{\rho}$.
\end{proposition}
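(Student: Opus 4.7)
The plan is to compute the $\tau$--degree of the operator $\rho_{T} = \rho'_{T}$ in two different ways: once as an $R_{T}$--Drinfeld module and once as an $A$--Drinfeld module, and then equate the two expressions.

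Recall the defining identity for the rank of a Drinfeld module: if $\sigma\colon B\to E\langle\tau\rangle$ is a Drinfeld module of rank $s$ over a Dedekind ring $B\subseteq K$, then for every nonzero $b\in B$ one has
\[
\deg_{\tau}\sigma_{b}\;=\;s\cdot\dim_{{\ma F}_{q}}(B/(b)).
\]
First I would apply this formula to $\rho'$ and to the element $T\in R_{T}$. Since $\dim_{{\ma F}_{q}}(R_{T}/(T))=1$, this yields $\deg_{\tau}\rho'_{T}=r_{\rho'}$. Second, I would apply the same formula to $\rho$ and to $T$ viewed as an element of $A$, obtaining
\[
\deg_{\tau}\rho_{T}\;=\;r_{\rho}\cdot\dim_{{\ma F}_{q}}(A/(T)).
\]
Because $\iota(T)=T$, the two operators $\rho'_{T}$ and $\rho_{T}$ coincide, so their $\tau$--degrees agree.

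The crucial remaining step is therefore to compute $\dim_{{\ma F}_{q}}(A/(T))$, and here the hypothesis ${\mathfrak N}_{T}={\mathfrak p}_{\infty}^{n}$ enters. By the product formula (equivalently, the fact that the divisor of zeros and the divisor of poles of $T\in K^{\ast}$ have the same degree), the zero divisor of $T$ in $K$ has degree $n\,d_{\infty}$. Since the only pole of $T$ is $\pp$, this zero divisor is supported at finite places, and as $A$ is the ring of regular functions outside $\pp$, the norm of the ideal $(T)\subseteq A$ satisfies $|A/(T)|=q^{n d_{\infty}}$; hence $\dim_{{\ma F}_{q}}(A/(T))=n\,d_{\infty}$.

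Putting the two computations together gives $r_{\rho'}=d_{\infty}\,n\,r_{\rho}$, as desired. The only non--routine point is the identification of $\dim_{{\ma F}_{q}}(A/(T))$ with $n\,d_{\infty}$; the rest is a direct application of the rank formula combined with the equality $\rho'_{T}=\rho_{T}$.
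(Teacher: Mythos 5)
Your proof is correct and follows essentially the same route as the paper's: both compute the $\tau$--degree of a single twisted polynomial in two ways via the rank--degree formula and the identity $\dim_{{\ma F}_q}(A/(a))=d_{\infty}n\dim_{{\ma F}_q}(R_T/(a))$. The only cosmetic difference is that you specialize to the element $T$ (which suffices, since one non--constant element determines the rank) and obtain $\dim_{{\ma F}_q}(A/(T))=nd_{\infty}$ from the product formula, whereas the paper argues with a general $\alpha_0\in R_T$ and the ramification index $e(\pp\mid{\mathcal P}_{\infty})=n$.
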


\begin{proof} We have
$\deg_{R_T}\alpha_0=-v_{\p}(\alpha_0)$, where $\p$ 
is the pole of $T$. Further, we have $d_{\p}=1$ and
\[
\deg_{A}\alpha_0=
-d_{\infty}v_{\pp}(\alpha_0)=-d_{\infty}e({\mathfrak p}_{\infty}|
\p)v_{\p}(\alpha_0)=d_{\infty}n\deg_{R_T} \alpha_0.
\]
It follows that
\begin{align*}
\deg\rho_{\alpha_0}&=r_{\rho^{\prime}}\deg_{R_T}{\alpha_0}
\quad \text{and}\\
\deg\rho_{\alpha_0}&=r_{\rho}\deg_A \alpha_0=
d_{\infty}n\deg_{R_T} \alpha_0.
\end{align*}
Therefore $r_{\rho^{\prime}} =d_{\infty}nr_{\rho}$.
\end{proof}

\begin{proposition}\label{P3.1''}
Let $L/E$ be a finite extension. Then with the conditions
of Proposition {\rm{\ref{P3.1'}}}, we have
$\cog_{\rho}(L/E)=\cog_{\rho^{\prime}}(L/E).$
\end{proposition}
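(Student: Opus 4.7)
The plan is to prove the stronger pointwise equality $\DrinT_{\rho}(L/E) = \DrinT_{\rho'}(L/E)$; passing to quotients by $E$ then yields $\cog_\rho(L/E) = \cog_{\rho'}(L/E)$ immediately. The inclusion $\DrinT_{\rho'}(L/E) \subseteq \DrinT_\rho(L/E)$ is free: if $m \in R_T\setminus\{0\}$ witnesses membership on the right, then $\iota(m) \in A\setminus\{0\}$ and $\rho_{\iota(m)}(u) = \rho'_m(u) \in E$ by construction of $\rho'$.

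For the reverse inclusion, I would transcribe the integral-closure argument of Remark \ref{O3.7} into the present context. Given $u \in \DrinT_\rho(L/E)$, pick $a \in A\setminus\{0\}$ with $\rho_a(u) \in E$. Since the pole divisor of $T$ is ${\mathfrak p}_\infty^n$, the ring $A$ is the integral closure of $R_T$ in $K$; hence $a$ satisfies an equation
\[
a^n + \alpha_{n-1}a^{n-1} + \cdots + \alpha_1 a + \alpha_0 = 0, \qquad \alpha_i\in R_T,\ \alpha_0\neq 0,
\]
where the nonvanishing of $\alpha_0$ follows from the irreducibility of the minimal polynomial of $a$ over $k$ (which lies in $R_T[X]$ because $R_T$ is integrally closed in $k$) together with $a\neq 0$. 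Applying the ring homomorphism $\rho$ and evaluating at $u$ gives
\[
\rho_{a^n}(u) + \rho_{\alpha_{n-1}}(\rho_{a^{n-1}}(u)) + \cdots + \rho_{\alpha_1}(\rho_a(u)) + \rho_{\alpha_0}(u) = 0.
\]
Each $\rho_c$ with $c\in A$ is a $\tau$-polynomial with coefficients in $E$, so $E$ is stable under every $\rho_c$; combined with $\rho_a(u)\in E$, this forces every term $\rho_{\alpha_i}(\rho_{a^i}(u))$ with $i\geq 1$ to lie in $E$. Consequently $\rho'_{\alpha_0}(u) = \rho_{\alpha_0}(u) \in E$, and since $\alpha_0 \in R_T\setminus\{0\}$ we conclude $u \in \DrinT_{\rho'}(L/E)$.

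The only delicate point is the nonvanishing of $\alpha_0$, which is why the minimal polynomial (rather than an arbitrary integral polynomial of $a$ over $R_T$) must be invoked. Everything else is a straightforward application of the fact that $\rho$ is a ring homomorphism and that $E$ is closed under the operators $\rho_c$. Thus the proof is essentially a Drinfeld-cogalois analogue of the reduction to $R_T$ already exploited for torsion in Remark \ref{O3.7}.
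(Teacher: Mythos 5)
Your proof is correct and follows essentially the same route as the paper: the easy inclusion via $\rho'_{\alpha_0}=\rho_{\iota\alpha_0}$, and the reverse inclusion by writing down an integral equation $a^n+\alpha_{n-1}a^{n-1}+\cdots+\alpha_0=0$ with $\alpha_0\in R_T\setminus\{0\}$, applying $\rho$, and isolating $\rho_{\alpha_0}(u)\in E$. Your explicit justification that $\alpha_0\neq 0$ (via the minimal polynomial) and that $E$ is stable under every $\rho_c$ only fills in details the paper leaves implicit.
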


\begin{proof}
Let $x\in{\cog}_{\rho^{\prime}}(L/E)$. There exists
$\alpha_0\in R_T$, $\alpha_0\neq 0$, such that
$\rho'_{\alpha_0}(x)\in E$. Therefore
$\rho^{\prime}_{\alpha_0}(x)=\rho_{\iota \alpha_0}(x)=\rho(\iota
\alpha_0)(x)
=\rho(\alpha_0)(x)=\rho_{\alpha_0}(x)\in E$. It follows that
$x\in{\cog}(L/E)$, and therefore $\cog_{\rho^{\prime}}
(L/E)\subseteq{\cog}_{\rho}(L/E)$.

Now let $x\in{\cog}_{\rho}(L/E)$. There exists $a\in A,$ $a\neq 0$ such that
$\rho_{a}(x)\in E$. Since $A$ is the integral closure of $R_T$ in $K$,
there exist $\alpha_0\neq 0,\alpha_1,\ldots,\alpha_{m-1}\in R_T$
such that $\alpha_0+\alpha_1a+\cdots+\alpha_{m-1}a^{m-1}+a^m=0$.
Therefore, considering $\alpha_{m}=1$, we obtain
\[
0=\rho_{0}(x)=\rho_{\sum\limits_{i=0}^{m}
\alpha_{i}a^{i}}(x)=\sum\limits_{i=0}^{m}\rho_{\alpha_{i}}
\rho_{a^i}(x)=\rho_{\alpha_{0}}(x)+
\sum\limits_{i=0}^{m}\rho_{\alpha_{i}}(\rho_{a}
\circ\cdots\circ\rho_{a})(x)\in E
\]
which implies $\rho_{\alpha_{0}}(x)\in 
E$. Hence $x\in{\cog}_{\rho'}(L/E)$. This finishes the proof.
\end{proof}

Now we return to the general case.

In the following examples, we consider the field ${\mathbb{F}}_{q}$
with $q>2$.

\begin{ejemplo}\label{ejemplo1drinfeld}
Let $A=R_T$ and $E=k$. The extension $k(\rho[m])/k$,
with $m\in A$ non constant, is radical since there exists
$W = \rho[m]\subseteq \DrinT(k(\rho[m])/k)$ 
such that $k(\rho[m]) = k(W)$. It is also a separable extension
but it is not pure since from Proposition \ref{P3.5}, we have
that $\mu(k(\rho[m]))$ is equal to $\rho[m]$ and if $c$
is an irreducible factor of $m$,
$\lambda_{c}\in \rho[m]$ does not belong to $k$.
Therefore $k(\rho[m])/k$ is not a cyclotomic coradical
extension.
\end{ejemplo}

\begin{ejemplo}\label{ejemplo5Drinfeld}
Let $A=R_T$ and $E=k$. Let $c\in A$ be an irreducible
polynomial. The extension $k(\rho[c^{n}])/k(\rho[c])$
is cyclotomic coradical since it is clear that is radical and
separable because the polynomial with coefficients 
in $k$, $\rho_{c^{n}}(U)$, is separable.
On the other hand, let $d\in A$ be an irreducible polynomial
in such a way that $\lambda_{d}\in k(\rho[c^{n}])$. From
Proposition \ref{P3.5}, we have that $d\mid c^n$
and thus $d\mid c$. It follows that the extension is pure.

The same argument may be applied for arbitrary $A$,
$\rho$ of rank one, $\eu c$ a nonzero ideal of $A$ and
the extension $K({\eu c}^n)/K({\eu c})$ for $n\in{\ma N}$.
\end{ejemplo}

Let us assume that $L/K$ is a radical extension. Therefore
there exist $\alpha_{i}\in L$ and $a_{i}\in A$ such that
$\rho_{a_{i}}(\alpha_{i})=\beta_{i}\in K$ and $L=K(\{\alpha_{i}\})$.
Now we take $\alpha_{i}$ arbitrary. We denote such element
only by $\alpha$. We define
\[
\varphi_{\overline{\alpha}}:A\rightarrow\cogaloistor(L/K)
\]
for $\varphi_{\overline{\alpha}}(a)=\overline{\rho_{a}(\alpha)}$.

This map is well defined and $I=\ker (\varphi_{\overline{\alpha}})$
is an nonzero ideal of $A$ distinct from $A$ itself. Hence, in case
$h_A=1$, there exists $a\in A$ such that $I=(a)$. We say that
$\alpha$ is of {\em order $a$}. This definition is ambiguous since
other generator of $I$ might be used as the order. However
we will accept this ambiguity.

\begin{teorema}\label{T5.1} Let $h_A=1$ and $E=K$.
Let $L/K$ be a radical extension, say $L=K(\{\alpha_{i}\})$.
Then $L/K$ is a Galois extension if and only for each
$\alpha_{i}$ of order $a_{i}$, we have $\lambda_{a_{i}}\in L$.
\end{teorema}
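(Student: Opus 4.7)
My plan is to analyze the polynomial $\rho_{a_i}(X)-\beta_i$, where $\beta_i=\rho_{a_i}(\alpha_i)\in K$. Since $\rho_{a_i}$ is ${\ma F}_q$--additive, its roots in $\bar K$ form the coset $\alpha_i+\rho[a_i]$; and since the linear coefficient of $\rho_{a_i}$ equals the structural constant $\delta(a_i)=a_i\neq 0$, the polynomial is separable. These two facts underlie both directions of the argument.

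For the direction ``$\Leftarrow$'', I would assume $\lambda_{a_i}\in L$ for every $i$. By Remark \ref{O3.9} this gives $\rho[a_i]\subseteq L$, so every conjugate $\alpha_i+\gamma$ of $\alpha_i$ over $K$ already lies in $L$. Since $L=K(\{\alpha_i\})$, normality follows, and separability comes from $\rho_{a_i}(X)-\beta_i$ being separable; hence $L/K$ is Galois.

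For the direction ``$\Rightarrow$'', fix some $\alpha=\alpha_i$ of order $a=a_i$ and introduce
\[
\psi\colon \Gal(L/K)\longrightarrow L\cap \rho[a],\qquad \sigma\mapsto \sigma(\alpha)-\alpha,
\]
which is well defined because $\rho_a(\sigma(\alpha)-\alpha)=\sigma(\beta_i)-\beta_i=0$. Let $N\subseteq L\cap \rho[a]$ be the $A$--submodule generated by the image of $\psi$. Since $\rho$ has rank one and $h_A=1$, the module $\rho[a]\cong A/(a)$ is cyclic over a PID, so $N=\rho[c]$ for some $c\in A$ with $c\mid a$. The inclusion $\psi(\sigma)\in \rho[c]$ for every $\sigma$ translates into $\sigma(\rho_c(\alpha))=\rho_c(\alpha)$, so $\rho_c(\alpha)\in L^{\Gal(L/K)}=K$. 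By the defining property of the order of $\alpha$, namely $(a)=\{b\in A\mid \rho_b(\alpha)\in K\}$, this forces $a\mid c$; together with $c\mid a$ this gives $(c)=(a)$, hence $N=\rho[a]\subseteq L$ and therefore $\lambda_a\in L$.

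The step I expect to be the main obstacle, and the only place where the hypothesis $h_A=1$ is genuinely used, is the passage from ``the image of $\psi$ generates a proper submodule of $\rho[a]$'' to ``$\alpha$ would admit a proper divisor of $a$ as its order''. This translation relies on the submodules of $\rho[a]$ being parametrized precisely by the divisors of $a$, which in turn uses both the rank one hypothesis on $\rho$ and the fact that $A$ is a principal ideal domain.
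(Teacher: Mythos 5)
Your proposal is correct and follows essentially the same route as the paper: in the forward direction, your module $N$ generated by the values $\sigma(\alpha)-\alpha$ is exactly the paper's module $B$ generated by the differences of conjugates, and your observation that $\rho_c(\alpha)$ is Galois--invariant (hence in $K$, forcing $a\mid c$) is the same step the paper carries out via the divisibility $\Irr(U,\alpha,K)\mid \rho_{a'}(U)-\rho_{a'}(\alpha)$. The converse direction is identical in both.
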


\begin{proof}
We first consider the case $L/K$ is a Galois extension.
Let $\alpha=\alpha_i$ be of order $a=a_i$.  Consider the
polynomial $f(U)=\rho_{a}(U)-\beta\in K[U]$, where
$\beta=\rho_{a}(\alpha)$. Now
$f(U)=\prod(U-(\alpha+\rho_{b}(\lambda_{a})))$, so that
$\Irr(U,\alpha,K)$ divides $f(U)$. It follows that the conjugates
of $\alpha$ in $L$ are of type
\[
\{\alpha+\xi_{1},\ldots,\alpha+\xi_{s}\},
\]
with $\xi_{j}\in \rho[a]$. Note that this set is contained in $L$.

Let $\xi_{i}=\rho_{b_{i}}(\lambda_{a})$ for some $b_{i}\in A$
and let $B$ be the $A$--module generated by $\{\xi_{1},\ldots,\xi_{s}\}
\subseteq\rho[a]$. There exists $a^{\prime}\in A$ such that
$a^{\prime}\mid a$ and $B=\rho[a^{\prime}]$.
Let $\beta^{\prime}=\rho_{a^{\prime}}(\alpha)$. Consider the
polynomial $g(U)=\rho_{a^{\prime}}(U)-\beta^{\prime}\in
K[U]$. We have $\Irr(U,\alpha,K)\mid g(U)$ and $g(\alpha)=0$.
Therefore $a^{\prime}\in I= \ker(\varphi_{\overline{\alpha}})$.
Hence $a\mid a^{\prime}$ and it follows that 
$a^{\prime}=au$ with $u\in A$ a unit. So, $\lambda_{a}\in L$.

Conversely, let $a$ be the order of $\overline{\alpha}$. Every
conjugate of $\alpha$ is of the form $\alpha+\rho_{b}
(\lambda_{a})\in L$. Thus $L/K$ is a normal extension. 
Since $\alpha$ is separable over $K$, it follows that the
extension $L/K$ is a Galois extension.
\end{proof}

\begin{proposition}\label{primitivo_explicito}
Let $L/E$ be an extension such that $L=K(\alpha,\beta)$
and such that there exist $m,n\in A$ with
$\rho_{m}(\alpha)\in K$ and $\rho_{n}(\beta)\in K$
being $m$ and $n$ relatively prime, that is, $(m)+(n)=A$.
Then $L=K(\alpha+\beta)$, that is, $\alpha+\beta$ is
a primitive element for the extension and it also belongs
to $\DrinT(L/K)$.
\end{proposition}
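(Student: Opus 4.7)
The plan is to take $\gamma := \alpha+\beta$ as the candidate primitive element and to recover $\alpha$ and $\beta$ individually from it by exploiting the Bezout identity that $(m)+(n)=A$ provides.

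First I would verify that $\gamma\in\DrinT(L/K)$, which is the second conclusion of the proposition. Since $\rho$ is a ring homomorphism into $E\langle\tau\rangle$, we have
\[
\rho_{mn}(\gamma)=\rho_n(\rho_m(\alpha))+\rho_m(\rho_n(\beta)),
\]
and both summands lie in $K$ by hypothesis. This also shows that $mn$ annihilates $\overline{\gamma}$ in $\cog(L/K)$.

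For the equality $L=K(\gamma)$, only the inclusion $K(\alpha,\beta)\subseteq K(\gamma)$ requires argument. The key observation is that applying $\rho_n$ to $\gamma$ gives
\[
\rho_n(\gamma)=\rho_n(\alpha)+\rho_n(\beta),
\]
and because $\rho_n(\beta)\in K\subseteq K(\gamma)$, we deduce $\rho_n(\alpha)\in K(\gamma)$. Now pick $r,s\in A$ with $rm+sn=1$, which exist by the coprimality hypothesis. Using the $\mathbb{F}_q$-linearity of $\rho$ and the additivity $a\mapsto\rho_a$, we obtain
\[
\alpha=\rho_1(\alpha)=\rho_{rm}(\alpha)+\rho_{sn}(\alpha)=\rho_r(\rho_m(\alpha))+\rho_s(\rho_n(\alpha)).
\]
The first summand lies in $K$ since $\rho_m(\alpha)\in K$, while the second lies in $K(\gamma)$ by the previous step; hence $\alpha\in K(\gamma)$, and consequently $\beta=\gamma-\alpha\in K(\gamma)$. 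This gives $K(\alpha,\beta)\subseteq K(\gamma)$, and combined with the obvious reverse inclusion yields the primitivity claim.

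The main (and rather mild) obstacle is psychological: one must resist thinking of the Bezout identity $rm+sn=1$ as a literal $A$-linear combination in $L$ and instead interpret it via the $A$-module structure induced by $\rho$, so that ``multiplication by $a$'' is replaced everywhere by $\rho_a$. Once this translation is made, the argument runs in exact parallel with the classical multiplicative proof of the same primitive-element statement for Kummer-type radicals.
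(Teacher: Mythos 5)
Your proof is correct and follows essentially the same route as the paper's: both apply $\rho_n$ (resp.\ $\rho_m$) to $\alpha+\beta$ to get $\rho_n(\alpha)$ into $K(\alpha+\beta)$, then use a Bezout relation $rm+sn=1$ together with $\rho_{rm+sn}=\rho_r\rho_m+\rho_s\rho_n$ to recover $\alpha$, and conclude with $\rho_{mn}(\alpha+\beta)\in K$ for the torsion claim. The only cosmetic difference is that you obtain $\beta$ as $\gamma-\alpha$ rather than repeating the Bezout computation, which is a harmless shortcut.
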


\begin{proof}
We have $K(\alpha+\beta)\subseteq K(\alpha,\beta)$. Let $\xi_{1}=
\rho_{m}(\alpha)$ and $\xi_{2}= \rho_{n}(\beta)$. Then
$\rho_{m}(\alpha+\beta)= \rho_{m}(\alpha)+\rho_{m}(\beta)=
\xi_{1}+\rho_{m}(\beta)\in K(\alpha+\beta)$ and
$\rho_{n}(\alpha+\beta)= \rho_{n}(\alpha)+\rho_{n}(\beta)=
\rho_{n}(\alpha)+\xi_{2}\in K(\alpha+\beta)$. Thus
$\rho_{m}(\beta),\rho_{n}(\alpha)\in K(\alpha+\beta)$.

Let $s_1,s_2\in A$ be such that $1=ms_{1}+ns_{2}$. Then
\begin{gather*}
\alpha=\rho_{1}(\alpha)=\rho_{ms_{1}+ns_{2}}(\alpha)=
\rho_{s_{1}}(\xi_{1})+\rho_{s_{2}}(\rho_{n}(\alpha))\in K(\alpha+\beta)
\intertext{and}
\beta=\rho_{1}(\beta)=\rho_{ms_{1}+ns_{2}}(\beta)= \rho_{s_{1}}(\rho_{n}(\beta))+ \rho_{s_{2}}(b)\in K(\alpha+\beta),
\intertext{so that $K(\alpha,\beta)= K(\alpha+\beta)$. Furthermore}
\rho_{mn}(\alpha+\beta)= \rho_{n}(\rho_{m}(\alpha))+\rho_{m}(\rho_{n}(\beta))\in K.
\end{gather*}
\end{proof}

We observe that the result can be generalized to extensions
$L/E$ such that $L=E(\alpha_{1},\ldots,\alpha_{s})$ and
such that there exist $m_{i}\in A$ with 
$\rho_{m_{i}}(\alpha_{i})= \beta_{i}\in E$ and the elements
$m_i$ are pairwise relatively prime.

Next we will give some definitions which are analogous to the ones given
in \cite{Schultheis}. Here, we consider $A$ of class number one
and $E=K$. As before, let $\rho$ denote a Drinfeld module. Let
$a\in A\setminus \{0\}$, $K$ be any finite extension of 
$k(\rho[a])$ and $z\in K\setminus
K_{a}$, where $K_{a}=\{\rho_{a}(y)\mid y\in K\}$. 
Consider the polynomial $G(U)=\rho_{a}(U)-z$. The decomposition
field of $G(U)$ will be called a {\em Drinfeld--Kummer extension}.
Multiplying $a$ by a suitable constant, we may assume that
$G(U)$ is a monic polynomial. This type of extensions will be
denoted by $K_{a,z}$. On the other hand, note that in general
the polynomial $G(U)$ is not irreducible over $K$. Let
$G_{1}(U),\ldots,G_{s}(U)$ be the irreducible monic factors of $G(U)$.

Next proposition establishes some properties of these extensions.

\begin{proposition}\label{proposicionDrinfeldKummerext}
Let $K$ be a finite extension of $k(\rho[a])$. Let $z\in K\setminus
K_{a}$ and let $K_{a,z}$ be the associated Drinfeld--Kummer
extension. Then
\las
\item $G(U)$ is a separable polynomial of degree
$q^{m}$ with $m=\deg a$.

\item If $\alpha\in \overline{k}$ is any root of $G(U)$, then
$W=\{\alpha+\lambda\mid \lambda\in \rho[a]\}$ is the set
of all roots of $G(U)$ and $K_{a,z}=K(\alpha)$.

\item There exists $s\in {\mathbb{N}}$ such that $[K_{a,z}:K]=p^{s}$.
\end{list}
\end{proposition}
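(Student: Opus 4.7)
The plan is to handle the three parts in sequence, with most of the work being a direct unpacking of the $\tau$--polynomial structure of $\rho_a$ and the inclusion $\rho[a]\subseteq K$.

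For part (1), I would compute $\deg_U G(U)=\deg_U \rho_a(U)$ directly. Since $\rho$ is of rank one we have $\deg(\rho_a)=\deg a = m$ in the $A$--degree, which translates to the $U$--degree $q^m$ of $\rho_a$ viewed as an $\mathbb{F}_q$--linear polynomial in $U$. For separability, I would observe that $\rho_a(U)=aU+c_1U^q+\cdots+c_m U^{q^m}$, so that $G'(U)=\rho_a'(U)=a$. Because the structural map $\delta\colon A\to E$ is the natural embedding and $a\neq 0$, the derivative is a nonzero element of $K$, so $\gcd(G,G')=1$ and $G$ is separable.

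For part (2), the key is additivity of $\rho_a$ (the $\mathbb{F}_q$--linearity of $\tau$--polynomials). For any $\lambda\in\rho[a]$ and any root $\alpha$ of $G$,
\[
\rho_a(\alpha+\lambda)=\rho_a(\alpha)+\rho_a(\lambda)=z+0=z,
\]
so $\alpha+\lambda$ is a root of $G$. The map $\lambda\mapsto \alpha+\lambda$ is injective, so $W$ contributes $|\rho[a]|$ distinct roots. Since $\rho$ has rank one, $\rho[a]\cong A/(a)$ has cardinality $q^m$, which matches $\deg G$ by part (1), so $W$ is the full root set. Because $K\supseteq k(\rho[a])$ contains every $\lambda\in\rho[a]$, adjoining a single root $\alpha$ to $K$ automatically produces all of the other roots $\alpha+\lambda$, which gives $K_{a,z}=K(\alpha)$.

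For part (3), I would use that $K_{a,z}/K$ is Galois (splitting field of a separable polynomial by (1) and (2)) and define
\[
\chi\colon\Gal(K_{a,z}/K)\longrightarrow (\rho[a],+),\qquad \sigma\mapsto \sigma(\alpha)-\alpha.
\]
By part (2), $\sigma(\alpha)\in W$, so $\chi(\sigma)\in\rho[a]$. A short check using $\rho[a]\subseteq K$ shows $\chi$ is a group homomorphism (every $\lambda\in\rho[a]$ is fixed by $\sigma$), and it is injective because $\sigma$ is determined by its value on the single generator $\alpha$ of $K_{a,z}/K$. Therefore $\Gal(K_{a,z}/K)$ embeds in the additive group of $\rho[a]$, which has order $q^m$ and hence is a $p$--group (where $p=\car K$). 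Any subgroup has order $p^s$ for some $s\in\mathbb{N}$, giving $[K_{a,z}:K]=p^s$.

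The only mildly delicate step is verifying that $\chi$ is a homomorphism; this requires noting that $\sigma(\lambda)=\lambda$ for all $\lambda\in\rho[a]$ (which follows from $\rho[a]\subseteq K$), so that $(\sigma\tau)(\alpha)-\alpha=\sigma(\alpha+\chi(\tau))-\alpha=\chi(\sigma)+\chi(\tau)$. Everything else is a straightforward consequence of $\mathbb{F}_q$--linearity of Drinfeld actions and the rank one identification $\rho[a]\cong A/(a)$.
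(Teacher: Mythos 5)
Your proposal is correct and follows essentially the same route as the paper: the same degree and additivity computations for (1) and (2), and for (3) the same monomorphism $\sigma\mapsto\sigma(\alpha)-\alpha$ from $\Gal(K_{a,z}/K)$ into the additive $p$--group $\rho[a]$ (the paper calls it $\Theta$). You merely make explicit the details the paper leaves implicit, such as $G'(U)=\delta(a)\neq 0$ for separability and the use of $\rho[a]\subseteq K$ to verify the homomorphism property.
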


\begin{proof}
(1) By the conditions satisfied by $\rho$, it is clear that $G(U)$
is separable and of the claimed degree.

(2) Since $\rho$ is a linear map it follows that $W$ is the
set of all roots of $G(U)$.

(3) Consider the Galois group $\Gal(L/K)$ of the extension
$L/K$. Consider $\sigma\in \Gal(L/K)$. Then
$\sigma(\alpha)=\alpha+\lambda_{\sigma}$ with $\lambda_{\sigma}\in
\rho[a]$. We define $\Theta:\Gal(L/K)\rightarrow
\rho[a]$ given by $\Theta(\sigma)=\lambda_{\sigma}$. We
have that $\Theta$ is well defined and since
$\sigma(\tau(\alpha))=\sigma(\alpha+\lambda_{\tau})=
\alpha+\lambda_{\sigma}+\lambda_{\tau}$,
it follows that $\Theta$ is a group homomorphism. 
Finally, if $\Theta(\sigma)=0$, then $\lambda_{\sigma}=0$,
that is, it follows from the definition of $\Theta$, 
that the automorphism
$\sigma$ is the identity map. Therefore $\Theta$ is
a group monomorphism. In particular $\Gal(L/K)$ is
an elementary $p$--abelian group, that is, for each 
$\sigma\in \Gal(L/K)$ we have $p\sigma=1$. The result follows.
\end{proof}

\section{Cyclotomic coradical extensions}\label{S6}

Cyclotomic coradical extensionshave several 
properties analogous to the ones of classical cogalois
extensions.

\begin{lema}\label{lemapurezadrinfeld}
Let $E\subseteq L\subseteq L^{\prime}$ be a tower of fields. Then
$L^{\prime}/E$ is pure if and only if $L^{\prime}/L$ and $L/E$ are
pure.
\end{lema}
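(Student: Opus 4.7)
The plan is to prove the equivalence by direct unpacking of the definition of purity, which states that $L/E$ is pure precisely when, for every irreducible $m\in A$, every element $\lambda_m\in L$ (i.e.\ every element of $\rho[m]$ lying in $L$) already belongs to $E$. Nothing more than the transitive inclusion of fields in the tower $E\subseteq L\subseteq L'$ is needed; no ramification or Galois-theoretic input is required.

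For the forward direction, assume $L'/E$ is pure. To see that $L/E$ is pure, take any irreducible $m\in A$ and any $\lambda_m\in L$; since $L\subseteq L'$, we have $\lambda_m\in L'$, and purity of $L'/E$ gives $\lambda_m\in E$. To see that $L'/L$ is pure, take any irreducible $m$ and any $\lambda_m\in L'$; purity of $L'/E$ gives $\lambda_m\in E\subseteq L$, as required.

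For the reverse direction, assume $L'/L$ and $L/E$ are pure. Let $m\in A$ be irreducible and let $\lambda_m\in L'$. Purity of $L'/L$ places $\lambda_m$ in $L$, and then purity of $L/E$ places $\lambda_m$ in $E$. Hence $L'/E$ is pure.

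There is no real obstacle here: the lemma is a formal consequence of the definition, exactly analogous to the corresponding statement for pure extensions in classical cogalois theory (as in \cite{greither}), and the only thing worth being careful about is to interpret $\lambda_m\in L$ correctly as ``every element of $\rho[m]$ contained in $L$'' rather than a single distinguished generator, so that the chase through the tower applies uniformly.
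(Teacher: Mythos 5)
Your proof is correct and is essentially the paper's own argument: the paper simply defers to the analogous Lemma 5.1 of \cite{sanchez}, which is the same direct definitional chase through the tower $E\subseteq L\subseteq L'$ that you carry out. Your remark about reading $\lambda_m\in L$ as ``any element of $\rho[m]$ lying in $L$'' is a reasonable clarification and does not change the argument.
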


\begin{proof}
Analogous to \cite[Lemma 5.1]{sanchez}.
\end{proof}

\begin{proposition}\label{proposicionprop_cogdrinfeld}
Let  $E\subseteq L\subseteq L^{\prime}$ be a tower of fields. Then
\las
\item There is an exact sequence of $A$--modules
\[
0\rightarrow \cogaloistor(L/E)\rightarrow 
\cogaloistor(L^{\prime}/E)\rightarrow \cogaloistor(L^{\prime}/L).
\]
\item  If the extension $L^{\prime}/E$ is cyclotomic coradical, 
then the extension $L^{\prime}/L$ is cyclotomic coradical.

\item If the extension $L^{\prime}/E$ is radical and the extensions
$L^{\prime}/L$ and $L/E$ are cyclotomic coradical, then 
$L^{\prime}/E$ is cyclotomic coradical.
\end{list}
\end{proposition}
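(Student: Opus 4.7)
For part (1), the plan is to construct the two maps directly from the natural inclusions on $\DrinT$-sets and verify exactness by chasing representatives. One has $\DrinT(L/E)\subseteq\DrinT(L'/E)$ (any $u\in L$ with $\rho_{m}(u)\in E$ lies in $L'$ and still satisfies the same equation) and $\DrinT(L'/E)\subseteq\DrinT(L'/L)$ (because $E\subseteq L$). Passing to the quotients by $E$ and $L$ respectively yields $A$-module homomorphisms $u+E\mapsto u+E$ and $u+E\mapsto u+L$. Injectivity of the first is immediate: if $u\in\DrinT(L/E)$ lies in $E$, its class in $\cogaloistor(L/E)$ is already zero. For exactness at $\cogaloistor(L'/E)$, the composition vanishes because any $u\in\DrinT(L/E)$ belongs to $L$; conversely, if $u\in\DrinT(L'/E)$ maps to zero in $\cogaloistor(L'/L)$, then $u\in L$, and since some $\rho_{m}(u)\in E$ we conclude $u\in\DrinT(L/E)$, furnishing the required preimage.

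For part (2), I will verify the three defining properties of a cyclotomic coradical extension in turn for $L'/L$. Radicality transfers from $L'/E$ to $L'/L$: if $L'=E(X)$ with $X\subseteq\DrinT(L'/E)$, then $L'=L(X)$ and $X\subseteq\DrinT(L'/L)$. Separability of $L'/L$ follows from separability of $L'/E$ by the standard transitivity of separability. Finally, purity of $L'/L$ follows directly from Lemma \ref{lemapurezadrinfeld}, which already gives both directions for towers.

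For part (3), the argument is essentially the mirror image. Radicality of $L'/E$ is given. Separability of $L'/E$ follows from the transitivity of separability applied to the extensions $L/E$ and $L'/L$, both of which are separable since they are cyclotomic coradical. Purity of $L'/E$ again follows from Lemma \ref{lemapurezadrinfeld}, this time using the purity of both $L/E$ and $L'/L$.

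No step presents a serious obstacle; the whole proposition is a formal consequence of the definitions together with the purity lemma already at hand. The most delicate point is in part (1), namely identifying the kernel of the second map with the image of the first, and this boils down to the observation that any $u\in L$ satisfying $\rho_{m}(u)\in E$ automatically lies in $\DrinT(L/E)$.
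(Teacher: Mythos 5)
Your proof is correct and follows exactly the route the paper intends: the paper's own ``proof'' is only a citation of the analogous result in \cite{sanchez}, and the argument there is the same formal verification from the definitions of $\DrinT$, radicality, separability, and Lemma \ref{lemapurezadrinfeld} that you carry out. Nothing is missing; the identification of the kernel with the image in part (1) and the use of the purity lemma in parts (2) and (3) are handled correctly.
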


\begin{proof}
Similar to \cite[Proposition 5.2]{sanchez}.
\end{proof}

Note that if $L/E$ is a Galois field extension, then 
$\mu_{\rho}(L)$ is a $G=\Gal(L/E)$--module with the natural action.

Next theorem holds for any finite extension and any $A$--Drinfeld
module $\rho$.

\begin{teorema}\label{teoremafinitud_TC/Ldrinfeld}
Let $L/E$ be a finite Galois extension and let $G$ be its
Galois group. Then the map $\phi\colon \cogaloistor(L/E)\rightarrow
Z^{1}(G,\mu(L))$ given by $\phi(u + E) = f_{u}$ where $f_{u}(\sigma)
= \sigma(u) - u$, is a group isomorphism.
\end{teorema}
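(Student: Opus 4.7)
The plan is to check the standard cocycle calculations directly and then reduce surjectivity to the additive version of Hilbert's Theorem 90 together with the finiteness of $\mu_\rho(L)$.

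First I would verify that $\phi$ is well-defined. For $u\in\DrinT(L/E)$, pick $m\in A\setminus\{0\}$ with $\rho_m(u)\in E$. Since $\rho$ is defined over $E$, the coefficients of $\rho_m$ lie in $E$ and commute with every $\sigma\in G$, so $\rho_m(\sigma(u)-u)=\sigma(\rho_m(u))-\rho_m(u)=0$. Hence $f_u(\sigma)\in\mu_\rho(L)$. The cocycle identity
\[
f_u(\sigma\tau)=\sigma\tau(u)-u=\bigl(\sigma(\tau(u)-u)\bigr)+(\sigma(u)-u)=\sigma\bigl(f_u(\tau)\bigr)+f_u(\sigma)
\]
is immediate, and if $u'=u+c$ with $c\in E$ then $f_{u'}=f_u$, so $\phi$ descends to $\cog(L/E)$. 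Additivity $f_{u+v}=f_u+f_v$ gives that $\phi$ is an $A$--module homomorphism (the $A$--linearity follows because the coefficients of $\rho_a$ lie in $E$, hence commute with $G$).

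For injectivity, if $\phi(u+E)=0$ then $\sigma(u)=u$ for every $\sigma\in G$, so $u\in L^G=E$ and $u+E=0$.

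The main obstacle, and the only substantive step, is surjectivity. Given a cocycle $f\in Z^1(G,\mu_\rho(L))\subseteq Z^1(G,L)$, I would invoke the additive form of Hilbert's Theorem 90 (equivalently, the normal basis theorem, which yields $H^1(G,L_+)=0$) to produce $u\in L$ with $f(\sigma)=\sigma(u)-u$ for all $\sigma\in G$. By Theorem \ref{T3.6}, $\mu_\rho(L)$ is finite, so by Proposition \ref{P3.8} there is an ideal $\mathfrak b$ with $\mu_\rho(L)=\rho[\mathfrak b]$; choosing any nonzero $m\in\mathfrak b$ we have $\rho_m(f(\sigma))=0$ for every $\sigma$. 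Then
\[
\sigma(\rho_m(u))-\rho_m(u)=\rho_m(\sigma(u)-u)=\rho_m(f(\sigma))=0
\]
for all $\sigma\in G$, whence $\rho_m(u)\in L^G=E$ and therefore $u\in\DrinT(L/E)$ with $\phi(u+E)=f$. The hard input here is really just the vanishing of additive Galois cohomology; everything else is bookkeeping that parallels the classical cogalois argument but with the Drinfeld action replacing multiplication.
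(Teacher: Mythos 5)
Your proof is correct and follows essentially the same route the paper defers to (the argument of Theorem 5.4 of \cite{sanchez}): the standard cocycle verifications, injectivity from $L^G=E$, and surjectivity from the vanishing of $H^1(G,L^{+})$ via the normal basis theorem. One minor remark: since the theorem is asserted for an arbitrary $A$--Drinfeld module, you should avoid invoking Proposition \ref{P3.8} (stated only for rank one) to produce a common annihilator --- it suffices to multiply together annihilators of the finitely many torsion values $f(\sigma)$, $\sigma\in G$.
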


\begin{proof}
Analogous to \cite[Theorem 5.4]{sanchez}.
\end{proof}

\begin{corollary}\label{cor_finitud_TC/Ldrinfeld}
Let $L/E$ be a finite extension. Then the $A$--module
$\cogaloistor(L/E)$ is finite.
\end{corollary}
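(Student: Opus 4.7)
The plan is to reduce to the case where $L/E$ is Galois, invoke Theorem \ref{teoremafinitud_TC/Ldrinfeld} to identify the cogalois module with a first cocycle group, and then appeal to the finiteness of the torsion (Theorem \ref{T3.6}).

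First I would verify that every element of $\DrinT(L/E)$ is separable over $E$. Since $\rho$ has generic characteristic, the structural map $\delta\colon A\to E$ is injective, so for $m\neq 0$ the linear coefficient $\delta(m)$ of $\rho_{m}(U)$ is nonzero. Hence, given $u\in \DrinT(L/E)$ with $\rho_{m}(u)\in E$, the polynomial $\rho_{m}(U)-\rho_{m}(u)\in E[U]$ is separable and annihilates $u$, so $u$ lies in $L^{\mathrm{sep}}$, the separable closure of $E$ inside $L$. Consequently $\DrinT(L/E)=\DrinT(L^{\mathrm{sep}}/E)$ and $\cogaloistor(L/E)=\cogaloistor(L^{\mathrm{sep}}/E)$, so we may replace $L$ by $L^{\mathrm{sep}}$, which is still finite over $E$.

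Next I would let $M$ be a Galois closure of $L^{\mathrm{sep}}/E$; since $L^{\mathrm{sep}}/E$ is finite and separable, $M/E$ is a finite Galois extension. Applying part (1) of Proposition \ref{proposicionprop_cogdrinfeld} to the tower $E\subseteq L^{\mathrm{sep}}\subseteq M$ yields an injection
\[
\cogaloistor(L/E)=\cogaloistor(L^{\mathrm{sep}}/E)\hookrightarrow \cogaloistor(M/E),
\]
so it suffices to prove that $\cogaloistor(M/E)$ is finite.

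Finally, since $M/E$ is finite Galois with group $G=\Gal(M/E)$, Theorem \ref{teoremafinitud_TC/Ldrinfeld} gives an isomorphism $\cogaloistor(M/E)\cong Z^{1}(G,\mu(M))$. The group $G$ is finite, and $\mu(M)=\mu_{\rho}(M)$ is finite by Theorem \ref{T3.6} because $M$ is a finite extension of $K$. Therefore $Z^{1}(G,\mu(M))$, being a subset of the set of maps from $G$ to $\mu(M)$, is finite, and the result follows. The only step requiring more than a routine citation is the separability reduction; everything else is a direct composition of results proved earlier in the paper.
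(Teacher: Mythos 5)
Your proof is correct and follows essentially the same route as the paper: pass to a Galois closure, embed $\cogaloistor(L/E)$ into the cogalois module of that closure via the exact sequence of Proposition \ref{proposicionprop_cogdrinfeld}, identify the latter with $Z^{1}(G,\mu(\cdot))$ by Theorem \ref{teoremafinitud_TC/Ldrinfeld}, and conclude by the finiteness of torsion from Theorem \ref{T3.6}. Your preliminary separability reduction --- observing that $\rho_{m}(U)-\rho_{m}(u)$ has nonzero derivative $\delta(m)$, so every element of $\DrinT(L/E)$ is separable over $E$ --- is a worthwhile extra precision that the paper's one-line proof glosses over when it speaks of ``the Galois closure of $L/E$''.
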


\begin{proof}
We take the Galois closure $\tilde{L}/E$ of $L/E$. The
result follows from Proposition \ref{proposicionprop_cogdrinfeld},
Theorem \ref{teoremafinitud_TC/Ldrinfeld} and from Theorem
\ref{T3.6}.
\end{proof}

Several results from \cite{sanchez} also hold in our situation.

From now on, the following results only hold for $A$ such
that $h_A=1$.

\begin{proposition}\label{cogalois_disjuntodrinfeld}
Let $A$ be such that $h_A=1$ and $L/E$ be a field
extension such that
\[
[L:E]=\ell
\]
with $\ell$ a prime number different from $p = \car k$. Then $L/E$
is not a cyclotomic coradical extension.
\end{proposition}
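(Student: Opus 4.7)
The plan is to argue by contradiction and reduce to the Drinfeld--Kummer setting of Proposition~\ref{proposicionDrinfeldKummerext}. Suppose $L/E$ is cyclotomic coradical, hence radical, separable and pure. Because $L/E$ is radical and $[L:E]=\ell$ is prime, there exists $\alpha\in \DrinT(L/E)\setminus E$ with $L=E(\alpha)$. Since $h_A=1$, we may choose $a\in A\setminus\{0\}$ such that the annihilator of $\overline{\alpha}$ in $\cog(L/E)$ equals $(a)$; call $a$ the order of $\alpha$.

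First I would reduce $a$ to a single irreducible prime. Factor $a=\prod_{i=1}^{s}c_i^{e_i}$ with the $c_i$ distinct irreducibles of $A$, set $a_i=a/c_i^{e_i}$, choose $d_i\in A$ with $1=\sum d_i a_i$, and define $\alpha_i=\rho_{d_i a_i}(\alpha)$. A short computation using that $(a)$ is the full annihilator of $\overline{\alpha}$ shows that each $\alpha_i$ has order exactly $c_i^{e_i}$ and that $\alpha=\sum\alpha_i$. Since $L=E(\alpha_1,\ldots,\alpha_s)$ and $[L:E]=\ell$ is prime, some $\alpha_i$ already generates $L$; replacing $\alpha$ by that $\alpha_i$ (of order, say, $c^{e}$ with $c$ irreducible) and then by $\rho_{c^{e-1}}(\alpha)$, which still lies in $L\setminus E$, I reduce to the case where $\alpha$ has order exactly an irreducible element $c\in A$.

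Next I would invoke Kummer theory for Drinfeld modules. Set $F:=E(\rho[c])$; by Proposition~\ref{P3.1}, $F/E$ is abelian Galois with $\Gal(F/E)\hookrightarrow (A/(c))^{\ast}$. The compositum $LF=F(\alpha)$ is generated by a root of $\rho_{c}(U)-\beta$, where $\beta=\rho_{c}(\alpha)\in E\subseteq F$, and all such roots differ by elements of $\rho[c]\subseteq F$. Therefore either $\alpha\in F$ and $LF=F$, or $LF=F_{c,\beta}$ is a genuine Drinfeld--Kummer extension; in the latter case Proposition~\ref{proposicionDrinfeldKummerext}(3) gives $[LF:F]=p^{s}$ for some $s\geq 1$, while $[LF:F]=[L:L\cap F]$ divides $[L:E]=\ell\neq p$, forcing $[LF:F]=1$. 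In either case $L\subseteq F$.

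To conclude, since $F/E$ is abelian the intermediate field $L$ is itself Galois over $E$ of prime degree $\ell$. Theorem~\ref{T5.1} applied to $\alpha$ (of order $c$) then yields $\lambda_{c}\in L$, and purity of $L/E$, together with the irreducibility of $c$, pushes this down to $\lambda_{c}\in E$. Hence $\rho[c]\subseteq E$ and $F=E$, contradicting $E\subsetneq L\subseteq F$. The main technical point is the first reduction: one must verify that the CRT-style splitting $\alpha=\sum\alpha_i$ does not collapse the order of any $\alpha_i$ below $c_i^{e_i}$, which is exactly where the exactness of the annihilator of $\overline{\alpha}$ is used; the Kummer step and the final contradiction are then formally analogous to the classical cogalois argument.
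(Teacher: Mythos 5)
Your proof is correct and follows essentially the same route as the paper's, which simply defers to \cite[Proposition 6.1]{sanchez}: reduce the radical generator to one of irreducible order $c$, pass to $F=E(\rho[c])$, use the Drinfeld--Kummer argument of Proposition \ref{proposicionDrinfeldKummerext} to force $[LF:F]$ to be both a power of $p$ and a divisor of $\ell$, hence $L\subseteq F$, and then combine Theorem \ref{T5.1} with purity to get $\rho[c]\subseteq E$ and a contradiction. The CRT reduction and the annihilator computation you flag as the technical point are carried out correctly.
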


\begin{proof}
Analogous to \cite[Proposition 6.1]{sanchez}.
\end{proof}

\begin{corollary}\label{cogalois_disjuntocor1drinfeld}
Let $A$ with $h_A=1$ and let $L/E$ be a Galois extension such that
\[
[L:E]= p^{s}n
\]
with $p\nmid n$ and $n>1$. Then $L/E$ is not a cyclotomic
coradical extension.
\end{corollary}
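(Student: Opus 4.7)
The plan is to reduce the statement to Proposition \ref{cogalois_disjuntodrinfeld} by finding an intermediate field $M$ with $E \subseteq M \subseteq L$ such that $[L:M]$ is a prime $\ell \neq p$, and then invoke part (2) of Proposition \ref{proposicionprop_cogdrinfeld} which passes the cyclotomic coradical property from the full tower down to the top piece.

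First, since $n > 1$ and $p \nmid n$, pick any prime divisor $\ell$ of $n$; necessarily $\ell \neq p$. Set $G = \Gal(L/E)$, so $|G| = p^s n$ and $\ell \mid |G|$. By Cauchy's theorem, there exists a subgroup $H \leq G$ with $|H| = \ell$. Let $M = L^H$ be the fixed field. Then $L/M$ is a (Galois) field extension of prime degree $[L:M] = |H| = \ell \neq p$.

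Now argue by contradiction: suppose $L/E$ were cyclotomic coradical. By Proposition \ref{proposicionprop_cogdrinfeld}(2) applied to the tower $E \subseteq M \subseteq L$, the extension $L/M$ would then be cyclotomic coradical. But this contradicts Proposition \ref{cogalois_disjuntodrinfeld}, since $[L:M] = \ell$ is a prime distinct from $p = \car k$. Hence $L/E$ is not cyclotomic coradical.

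The only non-formal step is the existence of the subgroup $H$ of order $\ell$, which is immediate from Cauchy's theorem; everything else is invocation of the results already established in the paper. There is no real obstacle here, the corollary being essentially a direct packaging of Proposition \ref{cogalois_disjuntodrinfeld} with the tower lemma via Galois theory.
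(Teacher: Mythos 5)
Your proof is correct and follows the expected route: the paper itself only cites the analogous Corollary 6.2 of \cite{sanchez}, whose argument is precisely this reduction --- Cauchy's theorem yields a subgroup of prime order $\ell\mid n$, $\ell\neq p$, whose fixed field $M$ gives a prime-degree top extension $L/M$, to which Proposition \ref{proposicionprop_cogdrinfeld}(2) and Proposition \ref{cogalois_disjuntodrinfeld} apply. Nothing is missing; the Galois hypothesis is used exactly where it must be, to produce the intermediate field $M$.
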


\begin{proof}
Analogous to \cite[Corollary 6.2]{sanchez}.
\end{proof}

\begin{corollary}\label{estructura1drinfeld}
If $h_A=1$ and $L/E$ is a cyclotomic coradical Galois
extension, then $[L:E]$ is of the form $p^{s}$, with $s\in {\mathbb{N}}$.
\fin
\end{corollary}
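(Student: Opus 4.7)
The plan is to deduce this corollary directly from the previous Corollary \ref{cogalois_disjuntocor1drinfeld}, which handles precisely the contrapositive situation. Since $L/E$ is finite (being cyclotomic coradical Galois, together with the finiteness provided by Corollary \ref{cor_finitud_TC/Ldrinfeld} via Theorem \ref{T3.6}), I would write its degree in the canonical form
\[
[L:E] = p^{s} n, \qquad \gcd(n,p)=1, \qquad s \in \mathbb{N} \cup \{0\}, \qquad n \geq 1,
\]
where $p = \car k$. The goal is then to show $n = 1$.

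Suppose for contradiction that $n > 1$. Then $L/E$ is a Galois extension satisfying the exact hypotheses of Corollary \ref{cogalois_disjuntocor1drinfeld} (we are assuming $h_A = 1$), and so $L/E$ cannot be a cyclotomic coradical extension. This contradicts our assumption that $L/E$ is cyclotomic coradical, and hence $n = 1$ as desired.

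There is essentially no obstacle here; the entire content of the statement has already been absorbed into Corollary \ref{cogalois_disjuntocor1drinfeld}, and the present corollary is only a restatement obtained by taking the contrapositive and extracting the $p$-primary part of the degree. The only subtlety worth mentioning is the (harmless) edge case $s = 0$: if $[L:E] = 1$, the conclusion $[L:E] = p^{0}$ holds trivially, and the argument above still applies since then $n = 1$ automatically. Thus the proof is a one-line deduction from the previous corollary.
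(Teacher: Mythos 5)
Your proof is correct and matches the paper's intent: the paper marks this corollary as immediate (no written proof), and the intended deduction is exactly your contrapositive application of Corollary \ref{cogalois_disjuntocor1drinfeld} to the decomposition $[L:E]=p^{s}n$ with $p\nmid n$.
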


\begin{lema}\label{pureza_pdrinfeld}
If $h_A=1$ and if $L/E$ is an extension such that $[L:E] = p^{s}$
with $s\in {\mathbb{N}}$, then $L/E$ is pure.
\end{lema}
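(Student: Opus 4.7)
The plan is to apply the abelian Galois structure on $\rho[m]$ and force a degree clash: if an $m$-torsion generator $\lambda_m$ lay in $L$ for some irreducible $m$, the field it generates over $E$ would have degree both dividing $p^s$ and coprime to $p$, so it would have to collapse to $E$ itself.

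First I would fix an irreducible $m \in A$ and assume $\lambda_m \in L$. Since $h_A=1$, the ideal $(m)$ is maximal, so $A/(m)$ is a field of order $q^{\deg m}$; because $\rho$ is of rank one, $\rho[m]\cong A/(m)$ is a simple $A$-module, and hence any nonzero element of $\rho[m]$ generates it under the $\rho$-action. In particular $E(\lambda_m)=E(\rho[m])$, and this subfield is contained in $L$.

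Next I would invoke Proposition \ref{P3.1} with $E$ in place of $K$---the same argument applies verbatim, or equivalently one may view $E(\rho[m])$ as the compositum $E\cdot K(\rho[m])$ to inherit the Galois structure---to conclude that $E(\rho[m])/E$ is abelian with Galois group embedding into $(A/(m))^{*}$. Therefore $[E(\lambda_m):E]$ divides $q^{\deg m}-1$, which is coprime to $p=\car k$. But $E(\lambda_m)\subseteq L$ also forces this degree to divide $[L:E]=p^s$. Together these constraints leave only $[E(\lambda_m):E]=1$, i.e.\ $\lambda_m\in E$, which is the definition of purity.

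The only real subtlety is the transfer of Proposition \ref{P3.1} from $K$ to the possibly larger base field $E$; once that is granted, the rest is a routine coprimality argument, since any extension whose degree is simultaneously a $p$-power and coprime to $p$ must be trivial.
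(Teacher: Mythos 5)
Your proof is correct and follows essentially the same route the paper relies on (it only cites the analogous Lemma 6.4 of \cite{sanchez}): reduce to $E(\lambda_m)=E(\rho[m])$, note its degree over $E$ divides $|(A/(m))^{\ast}|=q^{\deg m}-1$, which is coprime to $p$, and also divides $[L:E]=p^s$, forcing $\lambda_m\in E$. The transfer of Proposition \ref{P3.1} to the base $E$ that you flag is indeed harmless, since $E(\rho[m])$ is the compositum of $E$ with $K(\rho[m])$ and restriction embeds its Galois group into $\Gal(K(\rho[m])/K)$.
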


\begin{proof}
Analogous to \cite[Lemma 6.4]{sanchez}.
\end{proof}

As a consequence of the previous results, we obtain

\begin{teorema}\label{tdim_p_galoisdrinfeld} 
Assume $h_A=1$. A Galois extension $L/E$ is cyclotomic coradical
if and only if it is radical, separable and $[L:E]=p^{s}$ for some $s\in
{\mathbb{N}}$. \fin
\end{teorema}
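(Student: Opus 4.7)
The plan is to prove the two implications separately, each by direct invocation of a result already established in the excerpt, since Theorem \ref{tdim_p_galoisdrinfeld} is essentially a synthesis of Corollary \ref{estructura1drinfeld} and Lemma \ref{pureza_pdrinfeld}.

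For the forward implication, I would assume $L/E$ is a cyclotomic coradical Galois extension. By Definition \ref{definicion1_c2drinfelduno}, this means $L/E$ is radical, separable, and pure, which gives three of the four conclusions for free. The remaining claim is the degree restriction $[L:E]=p^{s}$, and this is exactly the content of Corollary \ref{estructura1drinfeld}, which was itself derived from Corollary \ref{cogalois_disjuntocor1drinfeld} applied to the Galois hypothesis.

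For the reverse implication, I would assume $L/E$ is Galois, radical, separable, with $[L:E]=p^{s}$ for some $s\in\mathbb{N}$. To conclude that $L/E$ is cyclotomic coradical, only purity remains to be verified. But Lemma \ref{pureza_pdrinfeld} tells us precisely that under the hypothesis $h_A=1$, any extension of degree $p^s$ is automatically pure. Combining this with the assumed radicality and separability yields that $L/E$ is cyclotomic coradical in the sense of Definition \ref{definicion1_c2drinfelduno}.

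There is no real obstacle in this argument; the work has been distributed to the earlier results, where the degree restriction required the nontrivial divisibility argument of Proposition \ref{cogalois_disjuntodrinfeld} (ruling out prime-to-$p$ factors via Kummer-type descent on the Drinfeld--Kummer extensions of Proposition \ref{proposicionDrinfeldKummerext}), and purity in $p$-power degree followed from the analysis of torsion points of order coprime to $p$. The proof of the theorem itself is thus a one-line assembly: the equivalence of the two packages of conditions reduces entirely to citing Corollary \ref{estructura1drinfeld} in one direction and Lemma \ref{pureza_pdrinfeld} in the other.
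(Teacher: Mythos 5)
Your proposal is correct and is exactly the argument the paper intends: the theorem is stated as an immediate consequence of Corollary \ref{estructura1drinfeld} (giving the degree restriction for the forward direction) and Lemma \ref{pureza_pdrinfeld} (supplying purity for the converse), with the remaining conditions matching Definition \ref{definicion1_c2drinfelduno} verbatim. No discrepancy with the paper's approach.
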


\begin{teorema}\label{lemadrinfeldextsimcogalois} 
Assume $h_A=1$ and let $L/E$ be a pure extension. Assume that
$L=E(\alpha)$ and that there exists an irreducible element
$d\in A$ such that $\rho_{d}(\alpha)=x\in E$.
Then $L/E$ is a cyclotomic coradical extension and there
exists $s\in {\mathbb{N}}$ such that $[L:E]=p^{s}$.
\end{teorema}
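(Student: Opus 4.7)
The plan is to verify the three defining properties of a cyclotomic coradical extension---radical, separable, and pure---and then to read off the degree bound from the polynomial that $\alpha$ satisfies. Purity is given as a hypothesis; radicality and separability will both follow from $\alpha$ being a root of $f(U):=\rho_{d}(U)-x\in E[U]$; and $[L:E]=p^{s}$ will come out as soon as one observes that $\deg_{U}f$ is already a $p$-power.

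First, radicality is immediate: by assumption $\rho_{d}(\alpha)=x\in E$ with $d\neq 0$, so $\alpha\in\DrinT(L/E)$, and the presentation $L=E(\alpha)=E(\{\alpha\})$ with $\{\alpha\}\subseteq\DrinT(L/E)$ exhibits $L/E$ as radical. For separability, since $\rho$ has rank one and $\delta\colon A\to E$ is the natural embedding, the constant ($\tau^{0}$) coefficient of $\rho_{d}$ is $\delta(d)=d\neq 0$ in $E$. Viewing $f(U)$ as an ordinary polynomial in $U$ gives $f(U)=\delta(d)U+(\text{higher }q\text{-powers of }U)-x$, whose formal derivative is $f'(U)=\delta(d)\neq 0$. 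Hence $f$ is separable, so $\alpha$---and therefore $L=E(\alpha)$---is separable over $E$. Combined with the assumed purity, this shows $L/E$ is cyclotomic coradical.

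It remains to compute the degree. Since $\Irr(U,\alpha,E)$ divides $f(U)$ and $\deg_{U}f=q^{\deg d}$ is a power of $p=\car k$, we conclude that $[L:E]=\deg\Irr(U,\alpha,E)$ is itself a power of $p$, giving $[L:E]=p^{s}$ for some $s\in\mathbb{N}$. There is no real obstacle in any of this; the only subtlety worth flagging is that for the additive polynomial $\rho_{d}(U)$ the linear coefficient is the whole derivative, so the nonvanishing of $\delta(d)$ is precisely what is needed for separability. The hypothesis $h_{A}=1$ never enters this particular argument directly, but it is consistent with its role elsewhere in the section (for instance, in justifying the notion of \emph{order} used in Theorem \ref{T5.1}).
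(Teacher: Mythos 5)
The first half of your argument is sound: $\alpha\in\DrinT(L/E)$ gives radicality, and the nonvanishing of the linear coefficient of the additive polynomial $\rho_{d}(U)$ (namely $\delta(d)=d\neq 0$, since $\rho$ has generic characteristic) makes $f(U)=\rho_{d}(U)-x$ separable; together with the assumed purity this does establish that $L/E$ is cyclotomic coradical. The gap is the final degree computation. From ``$\Irr(U,\alpha,E)$ divides $f(U)$ and $\deg_{U}f=q^{\deg d}$ is a power of $p$'' it does not follow that $\deg\Irr(U,\alpha,E)$ is a power of $p$: the degree of an irreducible factor of a polynomial need not divide the degree of the polynomial, let alone inherit its prime support. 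Your own observation that $h_{A}=1$ is never used is the decisive warning sign. Section~\ref{S8} of the paper constructs, for $q=p=3$ and an $A$ with $h_{A}=2$, a pure extension $E(\delta)/E$ with $\rho_{T\xi}(\delta)=0\in E$ and $T\xi$ irreducible, where $[E(\delta):E]=2$: there $f(U)=\rho_{T\xi}(U)$ has degree $9$ while $\Irr(U,\delta,E)$ has degree $2$. Your argument, applied verbatim to that situation, would ``prove'' that $[E(\delta):E]$ is a power of $3$, so the step cannot be valid.

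What the conclusion actually requires is an interplay between purity and $h_{A}=1$. When $h_{A}=1$, the irreducible element $d$ generates a maximal ideal, so $\rho[d]\cong A/(d)$ is a \emph{simple} $A$-module and every nonzero element of $\rho[d]$ generates it (this is exactly what fails for $T\xi$ in the Section~\ref{S8} example, where $(T\xi)={\eu p}_{\xi}{\eu p}_{\xi-1}$ is not prime). Any two distinct conjugates of $\alpha$ differ by a nonzero element of $\rho[d]$, so if $[L:E]>1$ the Galois closure contains all of $\rho[d]$; one then combines the embedding of $\Gal\bigl(L(\lambda_{d})/E(\lambda_{d})\bigr)$ into the elementary abelian $p$-group $(\rho[d],+)$ with a trace argument (the coefficient of $U^{r-1}$ in $\Irr(U,\alpha,E)$ is $-(r\alpha+\Lambda)$ with $\Lambda\in\rho[d]$, which for $p\nmid r$ would put $\alpha$ into $E(\rho[d])$ and, via purity and simplicity of $\rho[d]$, into $E$) to kill the prime-to-$p$ part of $[L:E]$. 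This is the content of the cited proof of \cite[Theorem 6.7]{sanchez}, and none of it is present in your write-up, so the assertion $[L:E]=p^{s}$ remains unproved.
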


\begin{proof}
Similar to \cite[Theorem 6.7]{sanchez}.
\end{proof}

As a consequence we obtain:

\begin{teorema}\label{tdim_pdrinfeld}
If $L/E$ is a cyclotomic coradical extension and if $h_A=1$, 
then $[L:E]=p^{n}$ for some $n\geq 0$.
\end{teorema}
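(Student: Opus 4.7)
The plan is to induct on $[L:E]$, at each step splitting off a simple subextension of irreducible order to which Theorem \ref{lemadrinfeldextsimcogalois} applies. The base case $[L:E]=1$ is immediate.

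For the inductive step, assume $[L:E]>1$. Since $L/E$ is radical, write $L=E(\{\alpha_i\})$ with $\rho_{a_i}(\alpha_i)\in E$, and pick one generator $\alpha:=\alpha_i\notin E$ (which exists because $L\neq E$). Consider the map $\varphi_{\bar{\alpha}}\colon A\to\cog(L/E)$, $b\mapsto\overline{\rho_b(\alpha)}$, introduced in Section \ref{S5}. Because $h_A=1$, its kernel is a principal ideal $(a_0)$ with $a_0$ a nonunit. Choose an irreducible divisor $d$ of $a_0$ and set $\beta=\rho_{a_0/d}(\alpha)\in L$. Then $\rho_d(\beta)=\rho_{a_0}(\alpha)\in E$, while $\beta\notin E$: otherwise $a_0/d\in\ker(\varphi_{\bar{\alpha}})=(a_0)$, forcing $d$ to be a unit, contrary to irreducibility. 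Let $F=E(\beta)$, so $E\subsetneq F\subseteq L$.

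The extension $F/E$ is separable, as a subfield of the separable extension $L/E$, and pure: any $\lambda_c\in F\subseteq L$ with $c\in A$ irreducible lies in $E$ by purity of $L/E$. Since $F=E(\beta)$ with $\rho_d(\beta)\in E$ for an irreducible $d$, Theorem \ref{lemadrinfeldextsimcogalois} applies and gives $[F:E]=p^s$ for some $s\geq 0$.

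By Proposition \ref{proposicionprop_cogdrinfeld}(2), $L/F$ is cyclotomic coradical, and $[L:F]<[L:E]$ because $F\neq E$. The inductive hypothesis yields $[L:F]=p^t$, whence $[L:E]=p^{s+t}$. The only delicate point is verifying that purity descends to the intermediate field $F/E$, but this is immediate from the definition; once $F$ is located, the remainder is a clean PID-style reduction using the $h_A=1$ hypothesis to identify the order ideal of $\alpha$.
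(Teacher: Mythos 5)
Your proof is correct and follows essentially the same route as the paper's: both reduce to a tower of simple subextensions $E(\beta)$ with $\rho_d(\beta)$ in the base field for an irreducible $d$ (you peel off one such step at a time by induction via $\beta=\rho_{a_0/d}(\alpha)$, the paper writes out the whole tower at once by factoring each $m_i$ into irreducibles), and both then invoke Theorem \ref{lemadrinfeldextsimcogalois} together with the descent of purity to intermediate fields. Your version is somewhat more careful than the paper's, e.g.\ in verifying $\beta\notin E$ from the principality of $\ker\varphi_{\overline{\alpha}}$, but the underlying argument is the same.
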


\begin{proof}
Let $L/E$ be a cyclotomic coradical extension. Then
$L=E(\alpha_{1},\ldots,\alpha_{t})$, so that 
$\rho_{m_{i}}(\alpha_{i})=a_{i}\in E$ where
$m_{i}\in A$. Taking $m_{i}=d^{\varepsilon_{1,i}}_{1}\cdots
d^{\varepsilon_{i,i}}_{r_{i}}$, $\delta_{i,j}=
\rho_{\frac{m_{i}}{d_{i,j}}}(\alpha_{i})$ we have that
$\rho_{d_{i,j}}(\delta_{i,j})=a_{i}$. It follows that there exists a field
tower
\[
E\subseteq E(\beta_{1})\subseteq E(\beta_{1},\beta_{2})
\subseteq \cdots \subseteq E(\beta_{1},\ldots, \beta_{s})=L
\]
where for each $i=1,\ldots, s$ we have that
$\rho_{d_{i}}(\beta_{i})=b_{i}\in E(\beta_{1},\ldots ,\beta_{i-1})$ and
\begin{equation}\label{ec6_1drinfeld}
[L:E]=
\prod^{s}_{i=1}[E(\beta_{1},\ldots,\beta_{i}):E(\beta_{1},\ldots,
\beta_{i-1})].
\end{equation}

From equation (\ref{ec6_1drinfeld}) follows that it suffices to show
that if $L=E(\alpha)$ with $\rho_m (\alpha)=a\in E$ and if $m\in A$
is irreducible, and so $L/E$ is a cyclotomic coradical extension,
then $[L:E]=p^{i}$ for some $i\in
{\mathbb{N}}$. The later claim follows from Lemma
\ref{lemadrinfeldextsimcogalois}.
\end{proof}

\begin{corollary}\label{tdim_p_colateraldrinfeld}
With the notations of Theorem {\rm{\ref{tdim_pdrinfeld}}}
we have
\begin{gather*}
E(\alpha)\cap E(\lambda_{d})=E, \quad [L:E]=
[L(\lambda_{d}):E(\lambda_{d})]
\intertext{and}
\Irr(u,\alpha,E)=\Irr
(u,\alpha,E(\lambda_{d}))=F_{1}(u)=\prod(u-(\alpha+\lambda^{A}_{d})).
\end{gather*}
\end{corollary}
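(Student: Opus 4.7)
The plan is to combine a degree-coprimality argument with the simplicity of $\rho[d]$ as an $A$-module. Throughout I use the setup from the proof of Theorem \ref{tdim_pdrinfeld}: after reduction we may assume $L=E(\alpha)$ with $\rho_{d}(\alpha)=a\in E$ for some irreducible $d\in A$, and Lemma \ref{lemadrinfeldextsimcogalois} gives $[L:E]=p^{s}$ for some $s\geq 0$. I may also assume $L\neq E$, otherwise the statement is vacuous.

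First I would establish $E(\alpha)\cap E(\lambda_{d})=E$ and $[L(\lambda_{d}):E(\lambda_{d})]=[L:E]$ simultaneously by coprimality. By Proposition \ref{P3.1}, $E(\lambda_{d})/E$ is abelian with Galois group a subgroup of $(A/(d))^{\ast}$, so $[E(\lambda_{d}):E]$ divides $\Phi((d))=q^{\deg d}-1$, which is coprime to $p$. The intersection $L\cap E(\lambda_{d})$ sits inside both fields, so its degree over $E$ divides both $p^{s}$ and $q^{\deg d}-1$; these being coprime forces $L\cap E(\lambda_{d})=E$. Because $E(\lambda_{d})/E$ is Galois, the trivial intersection implies linear disjointness of $L$ and $E(\lambda_{d})$ over $E$, whence $[L(\lambda_{d}):E(\lambda_{d})]=[L:E]=p^{s}$. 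From the equality of degrees, $\Irr(u,\alpha,E)$ remains irreducible over $E(\lambda_{d})$, so $\Irr(u,\alpha,E)=\Irr(u,\alpha,E(\lambda_{d}))$.

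To pin down the explicit factorization $F_{1}(u)=\prod(u-(\alpha+\lambda_{d}^{A}))$, I would view $L(\lambda_{d})/E(\lambda_{d})$ as a Drinfeld--Kummer extension in the sense of the paragraph preceding Proposition \ref{proposicionDrinfeldKummerext}, since $E(\lambda_{d})$ contains all of $\rho[d]$. The injection $\Theta$ constructed in that proof realizes $\Gal(L(\lambda_{d})/E(\lambda_{d}))$ as an $A$-submodule of $\rho[d]$. Since $d$ is irreducible, $\rho[d]\cong A/(d)$ is a simple $A$-module, so this submodule is either $\{0\}$ or all of $\rho[d]$. The first possibility would give $\alpha\in E(\lambda_{d})\cap L=E$, contradicting $L\neq E$; hence the Galois group equals $\rho[d]$ and the conjugates of $\alpha$ over $E(\lambda_{d})$ are precisely $\{\alpha+\lambda:\lambda\in\rho[d]\}=\{\alpha+\lambda_{d}^{A}\}$. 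This identifies $F_{1}(u)$ with $\prod_{\lambda\in\rho[d]}(u-(\alpha+\lambda))$ and, as a byproduct, pins down $p^{s}=q^{\deg d}$.

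The main obstacle is the very last step, identifying the Galois group of the Drinfeld--Kummer piece with all of $\rho[d]$: ruling out the trivial submodule requires feeding back the intersection result from the second step, and invoking the simplicity of $A/(d)$ to rule out any proper nonzero submodule. Once granted, this simultaneously gives both the explicit shape of $F_{1}(u)$ and the equality of minimal polynomials over $E$ and $E(\lambda_{d})$ claimed in the corollary.
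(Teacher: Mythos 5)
Your first step is sound and is essentially what the paper extracts from the proof of Theorem \ref{tdim_pdrinfeld}: since $[E(\lambda_{d}):E]$ divides $\Phi((d))=q^{\deg d}-1$ while $[L:E]=p^{s}$ by Lemma \ref{lemadrinfeldextsimcogalois}, coprimality forces $E(\alpha)\cap E(\lambda_{d})=E$, and because $E(\lambda_{d})/E$ is Galois this yields $[L(\lambda_{d}):E(\lambda_{d})]=[L:E]$ and $\Irr(u,\alpha,E)=\Irr(u,\alpha,E(\lambda_{d}))$.

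The gap is in your second step, precisely at the point you yourself flag as the main obstacle. The map $\Theta$ of Proposition \ref{proposicionDrinfeldKummerext} is only a monomorphism of \emph{additive} groups, so its image $H\subseteq\rho[d]$ is an ${\mathbb{F}}_{p}$-subspace; nothing in that proposition, nor in your argument, shows that $H$ is stable under the operators $\rho_{c}$ for $c\in A$. Since $d$ is irreducible and $h_{A}=1$, $\rho[d]\cong A/(d)$ is a finite field with $q^{\deg d}$ elements: it is simple as an $A$-module, but it has many proper nonzero additive subgroups whenever $q^{\deg d}>p$, and the simplicity argument excludes only proper nonzero $A$-\emph{submodules}. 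From $H\neq\{0\}$ (which you correctly get from $E(\alpha)\cap E(\lambda_{d})=E$) you may conclude that the $A$-module generated by $H$ is all of $\rho[d]$, but not that $H=\rho[d]$; hence you cannot conclude that the conjugates of $\alpha$ over $E(\lambda_{d})$ exhaust $\alpha+\rho[d]$, nor that $F_{1}(u)=\prod(u-(\alpha+\lambda_{d}^{A}))$, nor that $p^{s}=q^{\deg d}$. The hypotheses available at that stage cannot close the gap for free: purity is automatic for extensions of $p$-power degree (Lemma \ref{pureza_pdrinfeld}), so it carries no information beyond $[L:E]=p^{s}$, and every additive subgroup of $\rho[d]$ already has $p$-power order. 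A repair would have to use that $a=\rho_{d}(\alpha)$ lies in $E$ itself (not merely in $E(\lambda_{d})$), so that $H$ is stable under the image of $\Gal(E(\lambda_{d})/E)$ in $(A/(d))^{\ast}$, and then argue that this image is large enough to make $H$ an $A/(d)$-subspace; this is not automatic. The paper instead reads the explicit orbit of conjugates $\alpha+\rho_{b}(\lambda_{d})$ directly off the analysis of $\Irr(u,\alpha,E)$ carried out in the proof of Theorem \ref{tdim_pdrinfeld}, which ultimately rests on \cite[Theorem 6.7]{sanchez}, rather than on an abstract submodule argument.
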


\begin{proof}
It follows from the proof of Theorem \ref{tdim_pdrinfeld}.
\end{proof}

Next corollary is analogous to Theorem
\ref{tdim_p_galoisdrinfeld} except that we do not assume that
the extension $L/E$ is Galois.

\begin{corollary}\label{principal_resultadodrinfeld} Assume $h_A=1$.
An extension $L/E$ is cyclotomic coradical if and only if it is
separable, radical and $[L:E]=p^{m}$ for some $m\in {\mathbb{N}}$.
\end{corollary}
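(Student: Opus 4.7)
The plan is to dispatch both implications by directly invoking earlier results, since the content of the corollary is essentially a reorganization of what has already been established, with the Galois hypothesis of Theorem \ref{tdim_p_galoisdrinfeld} removed.

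For the forward implication, assume $L/E$ is cyclotomic coradical. By Definition \ref{definicion1_c2drinfelduno}, this already means that $L/E$ is separable and radical, so only the statement $[L:E]=p^{m}$ remains to be justified. This is exactly the content of Theorem \ref{tdim_pdrinfeld}, which, crucially, does not require $L/E$ to be Galois. So the forward direction needs no further argument.

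For the converse, suppose $L/E$ is separable, radical, and satisfies $[L:E]=p^{m}$ for some $m\in{\ma N}$. To conclude that $L/E$ is cyclotomic coradical it is enough, by Definition \ref{definicion1_c2drinfelduno}, to show that $L/E$ is pure. But Lemma \ref{pureza_pdrinfeld} asserts precisely that whenever $h_{A}=1$ and the degree of the extension is a power of $p$, the extension is automatically pure; no Galois or radicality hypothesis is required there. Combining this with the given radicality and separability yields the cyclotomic coradical property.

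The only subtle point in the argument is to make sure that the two cited results are applied in their non-Galois form: Theorem \ref{tdim_pdrinfeld} is stated for arbitrary cyclotomic coradical extensions (not only Galois ones), and Lemma \ref{pureza_pdrinfeld} likewise does not invoke normality. Since both are already established without such hypotheses, no additional work is needed, and the corollary follows immediately by chaining the two implications. I expect no genuine obstacle, as the corollary is essentially a bookkeeping consequence of the preceding development; the one thing to double-check is that the proof of Lemma \ref{pureza_pdrinfeld} (referenced to \cite[Lemma 6.4]{sanchez}) genuinely goes through for non-Galois extensions of $p$-power degree, so that the converse direction is not accidentally weaker than claimed.
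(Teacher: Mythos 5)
Your proof is correct and follows exactly the paper's route: the paper's own proof simply cites Theorem \ref{tdim_pdrinfeld} for the forward direction and Lemma \ref{pureza_pdrinfeld} for the converse, which is precisely the decomposition you give. Your extra care in checking that neither result requires a Galois hypothesis is a sensible sanity check but adds nothing beyond what the paper already relies on.
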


\begin{proof}
It follows from Theorem \ref{tdim_pdrinfeld} and Lemma
\ref{pureza_pdrinfeld}.
\end{proof}

\section{Some computations}\label{S7}

Let $L/E$ be a finite cyclotomic coradical Galois extension. Therefore
we have that $L=E(\alpha_{1},\ldots,\alpha_{s})$ for some $\alpha_{i}\in L$ and
for each $\alpha_{i}$ there exists $a_{i}\in A$ such that
$\beta_{i}=\rho_{a_{i}}(\alpha_{i})\in E$. We may consider the
polynomials $f_{i}(U)=\rho_{a_{i}}(U)-\beta_{i}$. The set of roots
of each polynomial $f_{i}(U)$ is of the form
$\{\alpha_{i}+\rho_{c}(\lambda_{i})\}_{c\in A}$. It follows that
$\grugal(E(\alpha_{i})/E)\subseteq \rho[a_{i}]$.
Therefore $\grugal(E(\alpha_{i})/E)$ is an elementary $p$--abelian group.
Since we have the group monomorphism
\[
\grugal(L/E)\hookrightarrow \prod\grugal(E(\alpha_{i})/E)
\]
it follows that $\grugal(L/E)$ is an elementary $p$--abelian group.

\begin{lema}\label{auxiliar_coho_cogalois}
Let $L/E$ be a finite Galois cyclotomic coradical extension. Then
\[
B^{1}(G,\mu(L))\cong \mu(L)/\mu(E),
\]
where $G=\grugal(L/E)$.
\end{lema}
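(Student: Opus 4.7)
The plan is to exhibit the standard evaluation map from $\mu(L)$ onto $B^{1}(G,\mu(L))$ and compute its kernel. Specifically, I would define
\[
\psi\colon \mu(L)\longrightarrow B^{1}(G,\mu(L)),\qquad \psi(u)(\sigma)=\sigma(u)-u,
\]
and show that $\mu(E)=\ker\psi$ while $\psi$ is surjective; the conclusion then follows from the first isomorphism theorem.

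First I would check that $\psi$ is well defined. If $u\in\mu(L)$ then $\rho_{a}(u)=0$ for some nonzero $a\in A$; since the coefficients of $\rho_{a}$ lie in $E$ (because $K_{\rho}\subseteq E$), each $\sigma\in G$ satisfies $\rho_{a}(\sigma u - u)=\sigma(\rho_{a}u)-\rho_{a}u=0$, so $\sigma(u)-u\in\mu(L)$. The map $\sigma\mapsto \sigma(u)-u$ is, by its very form, a $1$-coboundary, so it lies in $B^{1}(G,\mu(L))$; additivity of $\psi$ in $u$ is immediate.

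Surjectivity of $\psi$ is just the definition of $B^{1}$: every principal crossed homomorphism arises as $\sigma\mapsto \sigma(u)-u$ for some $u\in\mu(L)$. For the kernel, $u\in\ker\psi$ iff $\sigma(u)=u$ for all $\sigma\in G$, i.e.\ iff $u\in \mu(L)\cap L^{G}$. Because $L/E$ is Galois we have $L^{G}=E$, hence $\ker\psi=\mu(L)\cap E=\mu(E)$. The first isomorphism theorem then gives $\mu(L)/\mu(E)\cong B^{1}(G,\mu(L))$.

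I do not anticipate a serious obstacle: the whole argument is the additive analogue of the classical identification $B^{1}(G,L^{\ast})\cong L^{\ast}/E^{\ast}$, and the only point that requires genuine (but small) verification is that $\mu(L)$ is stable under $G$, which follows from $\rho_{a}$ having $E$-rational coefficients. The cyclotomic coradical hypothesis plays no role here beyond ensuring that $L/E$ is Galois (so $L^{G}=E$) and that $\mu(L)$ is a finite $A$-module (invoked in the surrounding results); it will be needed only in subsequent applications of this lemma.
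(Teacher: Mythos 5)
Your proof is correct and follows essentially the same route as the paper: both define the evaluation map $\psi(u)(\sigma)=\sigma(u)-u$ from $\mu(L)$ to $B^{1}(G,\mu(L))$ and deduce the isomorphism with $\mu(L)/\mu(E)$. If anything, you are more careful than the paper, which simply asserts that $\psi$ is ``clearly a group isomorphism,'' whereas you correctly identify the kernel as $\mu(E)$ and invoke the first isomorphism theorem.
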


\begin{proof}
The map $\psi:\mu(L)\rightarrow B^{1}(G,\mu(L))$ is defined as follows:
$\psi(u)=f_{u}$, where $u\in \mu(L)$ and $f_{u}= \sigma(u)-u$ for
each $\sigma \in G$. It is clear that $\psi$ is a group isomorphism.
\end{proof}

Let $\mu(L) = \rho[a]$ for some $a\in A$. We define
\[
\deg(\mu(L))=\deg a.
\]

\begin{proposition}\label{proposiciontamanomoduloconraices}
Consider $A$ with $h_A=1$ and let $L/E$ be a Galois cyclotomic
coradical extension. Assume that $\mu(L)=\mu(E)$
and let $a\in A$ be such that $\mu(L)=\rho[a]$. Then
\[
|\cogaloistor(L/E)|=q^{m\deg(\mu(L))},
\]
where $m=|G|$ with $G=\grugal(L/E)\cong C^{m}_{p}$.
\end{proposition}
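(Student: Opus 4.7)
The plan is to bootstrap the result directly from Theorem \ref{teoremafinitud_TC/Ldrinfeld} and Lemma \ref{auxiliar_coho_cogalois}, exploiting the hypothesis $\mu(L)=\mu(E)$. By Theorem \ref{teoremafinitud_TC/Ldrinfeld} we have a group isomorphism $\cogaloistor(L/E)\cong Z^{1}(G,\mu(L))$, so it suffices to count $1$-cocycles. By Lemma \ref{auxiliar_coho_cogalois}, $B^{1}(G,\mu(L))\cong\mu(L)/\mu(E)$, which vanishes under our assumption. Moreover, since $\mu(L)=\mu(E)\subseteq E$, the group $G$ acts trivially on $\mu(L)$, and therefore $Z^{1}(G,\mu(L))=\Hom(G,\mu(L))$.

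It remains to count $\Hom(G,\mu(L))$ as an abelian group. On one hand, by the discussion preceding Lemma \ref{auxiliar_coho_cogalois}, the Galois group of a Galois cyclotomic coradical extension is elementary abelian of exponent $p$; in our situation $G\cong C_{p}^{m}$ so $|G|=p^{m}$. On the other hand, $\mu(L)=\rho[a]$ is isomorphic to $A/(a)$ as an $A$-module since $\rho$ has rank one, hence as an $\ma F_{q}$-vector space its dimension equals $\deg a=\deg(\mu(L))$; writing $q=p^{r}$ this gives $|\mu(L)|=q^{\deg a}=p^{r\deg a}$, and $\mu(L)$ is itself elementary abelian of exponent $p$ because it sits inside the characteristic-$p$ field $\bar K$.

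Since both $G$ and $\mu(L)$ are annihilated by $p$, one has $\Hom(G,\mu(L))=\Hom_{\ma F_{p}}(G,\mu(L))$, and this is an $\ma F_{p}$-vector space of dimension $(\dim_{\ma F_{p}} G)\cdot(\dim_{\ma F_{p}}\mu(L))=m\cdot r\deg a$. Therefore
\[
|\cogaloistor(L/E)|=|\Hom(G,\mu(L))|=p^{mr\deg a}=q^{m\deg a}=q^{m\deg(\mu(L))},
\]
as required.

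The argument is essentially an assembly of previously established results; the only point that deserves emphasis is the vanishing of $B^{1}(G,\mu(L))$, which is exactly what the hypothesis $\mu(L)=\mu(E)$ is designed to ensure, and the recognition that the trivial $G$-action turns $Z^{1}$ into plain $\Hom$, allowing the computation to be performed via $\ma F_{p}$-dimension count rather than genuine cohomology.
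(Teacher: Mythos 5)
Your proof is correct and follows essentially the same route as the paper: both invoke Theorem \ref{teoremafinitud_TC/Ldrinfeld} to identify $\cogaloistor(L/E)$ with $Z^{1}(G,\mu(L))$, use the triviality of the $G$-action (equivalently the vanishing of $B^{1}$ guaranteed by $\mu(L)=\mu(E)$) to reduce to $\Hom(G,\mu(L))$, and then count that Hom group via an ${\mathbb{F}}_{p}$-dimension (matrix) computation. The only cosmetic difference is that the paper passes through $H^{1}=Z^{1}/B^{1}$ while you observe directly that $Z^{1}=\Hom$ under the trivial action; the content is identical.
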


\begin{proof}
First note that $B^{1}(G,\mu(L))=\{0\}$. On the other hand, since the action
of $G$ is trivial on $\mu(L)$, we obtain $H^{1}(G,\mu(L))=\Hom(G,\mu(L))$.
From Theorem \ref{teoremafinitud_TC/Ldrinfeld} follows 
\[
\cogaloistor(L/E)\cong Z^{1}(G,\mu(L))/B^{1}
(G,\mu(L))\cong H^{1}(G,\mu(L))=\Hom(G,\mu(L)).
\]

Now $|\mu(L)|=C^{s\deg(\mu(L))}_{p}$, so that
\begin{align*}
\Hom(G,\mu(L)) &= \Hom(C^{m}_{p},C^{s\deg(\mu(L))}_{p})\\
&={\mathfrak{L}}_{p}({\mathbb{F}}^{m}_{p},{\mathbb{F}}^{s\deg(\mu(L))}_{p})
={\mathfrak{M}}_{m\times s\deg(\mu(L))}({\mathbb{F}}_{p}).
\end{align*}

Therefore $| \Hom(G,\mu(L))|=q^{m\grado(\mu(L))}$.
\end{proof}

For the following result, we consider $h_A=1$ and $E=K$.

\begin{teorema}\label{acotacion2}
Let $L/K$ be a Galois cyclotomic coradical extension and assume
that $L=K(\mu(L))$. Then
$|\cogalois(L/K)|\leq q^{m\grado(\mu(L))}$.
\end{teorema}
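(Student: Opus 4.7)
The plan is to combine the cohomological description of the cogalois module provided by Theorem \ref{teoremafinitud_TC/Ldrinfeld} with an elementary counting argument on crossed homomorphisms. First I would record that, by the discussion preceding Lemma \ref{auxiliar_coho_cogalois}, the hypothesis of $L/K$ being Galois cyclotomic coradical already forces $G = \Gal(L/K)$ to be an elementary $p$-abelian group. I fix a generating set $\sigma_{1},\ldots,\sigma_{m}$ of $G$ with $G\cong C_{p}^{m}$, so that $|G|=p^{m}$ (this is the convention for $m$ already used in Proposition \ref{proposiciontamanomoduloconraices}).

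Next I would invoke Theorem \ref{teoremafinitud_TC/Ldrinfeld} to obtain a group isomorphism $\cogalois(L/K)\cong Z^{1}(G,\mu(L))$. Since $h_{A}=1$, Proposition \ref{P3.8} allows me to write $\mu(L)=\rho[a]$ for some $a\in A\setminus\{0\}$, and because $\rho$ has rank one, $|\mu(L)| = |A/(a)| = q^{\deg a} = q^{\deg(\mu(L))}$.

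The main step is then the elementary observation that any $1$-cocycle $f\in Z^{1}(G,\mu(L))$ is uniquely determined by the tuple $(f(\sigma_{1}),\ldots,f(\sigma_{m}))\in\mu(L)^{m}$: from the cocycle identity $f(\sigma\tau)=f(\sigma)+\sigma f(\tau)$ one propagates the value of $f$ from the generators to every word in them, and the identity $0=f(1)=f(\sigma)+\sigma f(\sigma^{-1})$ handles inverses. Hence the evaluation map
\[
Z^{1}(G,\mu(L))\longrightarrow \mu(L)^{m},\qquad f\longmapsto (f(\sigma_{1}),\ldots,f(\sigma_{m})),
\]
is injective, and I conclude
\[
|\cogalois(L/K)| \;=\; |Z^{1}(G,\mu(L))| \;\leq\; |\mu(L)|^{m} \;=\; q^{m\deg(\mu(L))}.
\]

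I do not expect a substantial obstacle: once the cohomological identification is in place, the bound is a counting inequality, and all of the structural inputs (the elementary $p$-abelian shape of $G$, the order of $\mu(L)$) are already available from the excerpt. The hypothesis $L=K(\mu(L))$ is not used in the counting itself, but it is the natural setting in which to compare the present estimate with Proposition \ref{proposiciontamanomoduloconraices}, where the same quantity $q^{m\deg(\mu(L))}$ is attained as an equality under the further assumption $\mu(L)=\mu(K)$; so the theorem may be regarded as the upper-bound companion of that exact formula.
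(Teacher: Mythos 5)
Your proof is correct and follows the route the paper clearly intends (its own proof is only a citation of the analogous \cite[Proposition 8.5]{sanchez}): identify $\cogalois(L/K)$ with $Z^{1}(G,\mu(L))$ via Theorem \ref{teoremafinitud_TC/Ldrinfeld}, use that $G\cong C_{p}^{m}$ is elementary $p$--abelian with $|\mu(L)|=q^{\deg(\mu(L))}$, and observe that a crossed homomorphism is determined by its values on the $m$ generators, giving $|Z^{1}(G,\mu(L))|\leq |\mu(L)|^{m}$. Your reading of $m$ as the rank of $G$ (rather than the literal ``$m=|G|$'' misprint in Proposition \ref{proposiciontamanomoduloconraices}) and your remark that the hypothesis $L=K(\mu(L))$ is not used in the counting itself are both accurate.
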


\begin{proof}
Similar to \cite[Proposition 8.5]{sanchez}.
\end{proof}

\section{Case $h_A>1$}\label{S8}

The fundamental results we have obtained for the cyclotomic
coradical extension with $A$ having class number one, are not
true any longer for $A$ with $h_A>1$. We give an example
showing why the results fail to hold.

Let $K={\ma F}_q(T)$ with $p=q=3$. Let $\pp$ be the place associated
to $T^2+1$ and let $A=
\{x\in K\mid v_{\eu p}(x)\geq \text{\ for every place ${\eu p}\neq \pp$}\}$.
Then
\[
A=\Big\{\frac{G(T)}{(T^2+1)^n}\mid G(T)\in{\ma F}_q[T], n\in{\ma N}, 
\deg G(T)\leq 2n\Big\}.
\]

Since $\pp$ is of degree $2$ and $h_K=1$, we have $h_A=2$.

Let $\xi=\frac{1}{T^2+1}$ and consider $R_{\xi}={\ma F}_q[\xi]$. Let
${\ma F}_q(\xi)$ denote the quotient field of 
$R_{\xi}$. Then $A$ is the integral closure of
$R_{\xi}$ in $K$. Using the division algorithm, it follows that if
$x\in A$, then $x=\frac{G(T)}{(T^2+1)^n}$ with $\deg G(T)\leq 2n$ and
\[
G(T)=\alpha_0+\alpha_1 (T^2+1)+\cdots +\alpha_n(T^2+1)^n
=\alpha_0+\alpha_1\xi^{-1}+\cdots +\alpha_n\xi^{-n},
\]
where $\alpha_i\in {\ma F}_q[T]$ is of degree less than or equal to $1$. 
Furthermore, because $\deg G(T)\leq n$, it follows that
$\alpha_n\in{\ma F}_q$.

Therefore
\begin{align*}
x=\frac{G(T)}{(T^2+1)^n}=\xi^n G(T)&=\alpha_n+\alpha_{n-1}\xi+\cdots
+\alpha_1\xi^{n-1}+\alpha_0\xi^n\\
&=\beta_0+\beta_1\xi+\cdots+\beta_{n-1}\xi^{n-1}+\beta_n\xi^n,
\end{align*}
with $\beta_i=\alpha_{n-1}=a_i+b_iT\in{\ma F}_q[T]$, $0\leq i\leq n$
and $\beta_0=a_0$.

Thus
\begin{align}\label{Eq2}
x=\xi^nG(T)&=\sum_{i=0}^n a_i\xi^i+T\sum_{i=1}^n b_i\xi^i\nonumber\\
&=\sum_{i=0}^n a_i\xi^i+(T\xi)\sum_{i=0}^{n-1}b_{i+1}\xi^i=F(\xi)+(T\xi)H(\xi)
\end{align}
with $F(\xi), G(\xi)\in R_{\xi}$, $\deg F(\xi)\leq n$, $\deg H(\xi)\leq n-1$.

Note that the degree of $F(\xi)$ in $T$ is even and the degree of $(T\xi)H(\xi)$
is odd, so that it follows that $x=0\iff F(\xi)=H(\xi)=0$. In particular
$\{1,T\xi\}$ is an integral basis of $A/R_{\xi}$. On the other hand,
since $\xi=\frac{1}{T^2+1}$, it follows that $(\xi T)^2=-\xi^2+\xi$.
Therefore 
\[
\ell(Z):=\Irr(Z,T\xi,{\ma F}_q(\xi))=Z^2+\xi^2-\xi.
\]

Let ${\ma F}_q[X,Y]\xrightarrow[\phantom{xxxx}]{\phi} A$ 
be given by $\phi(f(X,Y))=
f(\xi,T\xi)$. From (\ref{Eq2}), we obtain that $\phi$ is a ring epimorphism.
Further $\phi(Y^2+X^2-X)=0$, that is, $\langle Y^2+X^2-X\rangle
\subseteq \ker \phi$ and $\phi$ induces the epimorphism $\tilde{\phi}\colon
{\ma F}_q[X,Y]/\langle Y^2+X^2-X\rangle\longrightarrow A$ given by
$\tilde{\phi}(f(X,Y)\bmod \langle Y^2+X^2-X\rangle) = f(\xi,T\xi)$.
From (\ref{Eq2}) follows that $\tilde{\phi}$ is a ring isomorphism.

We may apply Kummer's Theorem on the decomposition of prime\
ideals in the ring extension $A/R_{\xi}$ since $A=R_{\xi}[T\xi]$.
In particular we have
\begin{gather*}
\ell(Z)\bmod \xi=Z^2;\qquad \ell(Z)\bmod (\xi-1)=Z^2;
\intertext{so that}
(\xi)={\eu p}_{\xi}^2\quad \text{with}\quad {\eu p}_{\xi}=(\xi,T\xi)\quad\text{and}\\
(\xi-1)={\eu p}_{\xi-1}^2\quad \text{with}\quad {\eu p}_{\xi-1}=
(\xi-1,T\xi),
\intertext{with ${\eu p}_{\xi}$ and ${\eu p}_{\xi-1}$
prime ideals of $A$. Furthermore, $(T\xi)^2=\xi(1-\xi)$, so that}
(T\xi)={\eu p}_{\xi} {\eu p}_{\xi-1}.
\end{gather*}
Thus, $\xi,\xi-1$ and $T\xi$ are irreducible non prime elements of $A$
and $(T\xi)^2=\xi(1-\xi)$.

Let $\rho\colon A\to E\langle \tau\rangle$ be a rank one
$A$--Drinfeld module, where $E=K_{\rho}=H_A$ is the field of 
definition of $\rho$. In fact, since $\deg \xi=\deg T\xi=2$,
$\rho$ is determined by
\begin{gather*}
\rho_{\xi}=\xi + \gamma_1\tau+\gamma_2 \tau^2;\qquad
\rho_{T\xi}=T\xi+\epsilon_1\tau+\epsilon_2 \tau^2
\intertext{and since $(T\xi)^2=\xi(1-\xi)$, we obtain}
\rho_{(T\xi)^2}=\rho_{T\xi} \rho_{T\xi}=\rho_{\xi}(1-\rho_{\xi})=\rho_{
\xi(1-\xi)}.
\end{gather*}

Let $L:=E(\lambda_{\xi})=E(\rho[\xi])$. Then $L/E$ satisfies that
$\Gal(L/E)\cong \big(A/(\xi)\big)^{\ast}=\big(A/({\eu p}_{\xi})^2\big)^{\ast}$
(Corollary \ref{C3.4}) and $\mu_{\rho}(L)=\rho[\xi]$ (Proposition \ref{P3.5}).
Consider the element
\[
\delta:=\rho_{T\xi}(\lambda_{\xi}).
\]
Then $E\subseteq E(\delta)\subseteq L$. Now, $G:=\Gal(L/E)$
may be identified with $\big(A/(\xi)\big)^{\ast}$ as follows. Since
$A\cong {\ma F}_q[X,Y]/\langle Y^2+X^2-X\rangle$, where $X$ is identified
with $\xi$ and $Y$ with $T\xi$, and the group identification is done by its
action on $\lambda_{\xi}$, then we may write
$G=\{\sigma_{U}\}_{U\in \{1,2,1+y,2+y,1+2y,2+2y\}}$
with $y=Y\bmod \langle Y^2+X^2-X\rangle=T\xi$
and $\sigma_U(\lambda_{\xi}):=\rho_U(\lambda_{\xi})$.

We have $G\cong C_6$ the cyclic group of $6$
elements and generated by
$2+y$. Further $(2+y)^2=1+y$, that is,
the subgroup of $G$ of order
$2$ is generated by $1+y$. Note that
\begin{align*}
\sigma_{1+y}(\delta)&=\rho_{1+T\xi}(\rho_{T\xi}(\lambda_{\xi}))=
\rho_{T\xi+(T\xi)^2}(\lambda_{\xi})=\rho_{T\xi}(\lambda_{\xi})+
\rho_{\xi(1-\xi)}(\lambda_{\xi})=\delta
\intertext{and}
\sigma_{2+y}(\delta)&=\rho_{2+T\xi}(\rho_{T\xi}(\lambda_{\xi}))=
\rho_{2 T\xi +(T\xi)^2}(\lambda_{\xi})\\
&=\rho_{2 T\xi}(\lambda_{\xi})+
\rho_{\xi(1-\xi)}(\lambda_{\xi})=2\delta+0\neq \delta,
\end{align*}
so that $E(\delta)$ is the fixed field of the subgroup $C_3$ of $G$ and
therefore $[E(\delta):E]=2$ and $[L:E(\delta)]=3$.

Note that $E(\delta)/E$ is a cyclotomic coradical extension and of
prime degree $2\neq p=q$. On the other hand, the subextension $L/E$ is
not pure and it is of degree $3=p=q$. All this is contrary to what we 
established in Proposition \ref{cogalois_disjuntodrinfeld}, Corollary
\ref{cogalois_disjuntocor1drinfeld}, Lemma
\ref{pureza_pdrinfeld} and to Theorems \ref{tdim_p_galoisdrinfeld},
\ref{lemadrinfeldextsimcogalois} and \ref{tdim_pdrinfeld}. In other words,
in its actual form, we do not have a cogalois theory for $A$--Drinfeld
modules of rank one if $h_A>1$.

\section{The Carlitz module}\label{S9}

In this section we consider the Carlitz module, more precisely
we are interested in computing the cardinality of the module
$\cog(L/K)$ where $L=k(T)(\Lambda_{P^{n}})$,
$K=k(\Lambda_{P})$ and $P\in R_{T}$ 
is a monic irreducible polynomial. We also assume that
$\car {\mathbb{F}}_{q}=p>2$. 
The goal of this section is to understand why it is so hard
to find torsion elements other than the obvious ones ($\Lambda_{P^n}$).

We have established the existence of a group isomorphism
\[
\phi:\cog(L/K)\rightarrow Z^{1}(G,M)
\]
where $G=\Gal(L/K)$ and $M=\Lambda_{P^{n}}$.
Therefore $\phi(\alpha+K)$ is a crossed homomorphism
defined as
\[
\phi(\alpha+K)(\sigma)=\sigma(\alpha)-\alpha
\]
for each $\sigma\in G$. To simplify the 
notation, frequently we will write
$\phi(\alpha)$ instead of $\phi(\alpha+K)$.
On the other hand if we have a crossed homomorphism
$f:G\rightarrow M$ it satisfies
\begin{equation}\label{defhomocruzado}
f(\sigma\cdot\tau)=f(\sigma)+\sigma\cdot f(\tau)
\end{equation}
for each $\sigma,\tau\in G$. We will understand that
$\sigma\cdot f(\tau)$ as the action of $\sigma$
on $f(\tau)$. Since the elements of $M$
are of the form ${\mathrm{C}}_{D}(\lambda_{P^{n}})$ we may write
\[
f(\sigma)={\mathrm{C}}_{D_{\sigma}}(\lambda_{P^{n}}).
\]

Further, by the division algorithm, we may assume that
$D_{\sigma}$ is a polynomial of degree less than or equal
to $\deg P^{n}=n\deg P$. Note that with the exponents of the
elements $\sigma$ it is possible to form a system of equations
using relation (\ref{defhomocruzado}) as follows: since
$f(\sigma\cdot\tau)={\mathrm{C}}_{D_{\sigma\cdot\tau}}
(\lambda_{P^{n}})$
we have 
\begin{equation}\label{defhomocruzado2}
{\mathrm{C}}_{D_{\sigma\cdot\tau}}(\lambda_{P^{n}})=
{\mathrm{C}}_{D_{\sigma}}(\lambda_{P^{n}})+
\sigma\cdot{\mathrm{C}}_{D_{\tau}}(\lambda_{P^{n}}).
\end{equation}
Now $\sigma$ is of the form
$\sigma=1+B_{\sigma}P^{s}$
with $\gcd(B_{\sigma},P)=1$ and $1\leq s\leq n-1$ (see \cite{Lam}). 
Therefore equation (\ref{defhomocruzado2}) can be rewritten as:
\begin{equation}\label{defhomocruzado3}
{\mathrm{C}}_{D_{\sigma\cdot\tau}}(\lambda_{P^{n}})
={\mathrm{C}}_{D_{\sigma}}(\lambda_{P^{n}})+
\sigma\cdot{\mathrm{C}}_{D_{\tau}}(\lambda_{P^{n}}).
\end{equation}
Since the group $G$ is commutative and $\tau=1+B_{\tau}P^{t}$, 
with $\gcd(B_{\tau},P)=1$ and $1\leq t\leq n-1$, we obtain
\begin{equation}\label{defhomocruzado4}
{\mathrm{C}}_{D_{\sigma\cdot\tau}}(\lambda_{P^{n}})
={\mathrm{C}}_{D_{\tau}}(\lambda_{P^{n}})+
{\mathrm{C}}_{(1+B_{\tau}P^{t})D_{\sigma}}\lambda_{P^{n}}.
\end{equation}

Therefore the exponents satisfy the system of 
equations (modulo $P^{n}$):
\begin{eqnarray}\label{defhomocruzado5}
D_{\sigma\cdot\tau}=D_{\sigma}+(1+B_{\sigma}P^{s})D_{\tau},\\
D_{\sigma\cdot\tau}=D_{\tau}+(1+B_{\tau}P^{t})D_{\sigma}.
\end{eqnarray}

If $\tau=\sigma^{-1}$, we have
\begin{equation}\label{defhomocruzado6}
0=D_{\sigma}+(1+B_{\sigma}P^{s})D_{\tau}.
\end{equation}

Equation (\ref{defhomocruzado6}) allow us to obtain the
possible solutions of system (\ref{defhomocruzado5}) and
therefore the number of elements of $\cog (L/K)$. This can
be used as a first rough algorithm to solve the system of equations.
First we obtain the multiplication table of the group $G$,
then we parameterize the possible solutions in a vector with
components
\[
D_{\sigma}=-(1+B_{\sigma}P^{s})D_{\tau}
\]
and finally we verify which of these vectors are really solutions
of system (\ref{defhomocruzado5}). One of the most important
problems we have with this approach is that, even for very small
prime numbers $p$, the number of solutions is very large.

This approach allows us to find explicit torsion elements other
than the class of $\lambda_{P^{n}}$. We achieve this goal
finding the complete list of crossed homomorphisms and
then we use the function $\phi$ to find the values we are 
interested in, that is, the torsion points of the module $\cog(L/K)$.

\begin{ejemplo}\label{E9.1}
We may apply the previous approach to the following case.
Let $q=p=3$, $P=T$ and $n=2$. We compute first the multiplication
table of  $G$:
\begin{center}
\begin{tabular}{l | c  r  r}
 & $\sigma_{1}$ & $\sigma_{2}$ & $\sigma_{3}$ \\
\hline
$\sigma_{1}$ & $\sigma_{1}$ & $\sigma_{2}$ & $\sigma_{3}$\\
$\sigma_{2}$ & $\sigma_{2}$ & $\sigma_{3}$ & $\sigma_{1}$\\
$\sigma_{3}$ & $\sigma_{3}$ & $\sigma_{1}$ & $\sigma_{2}$\\
\end{tabular}
\end{center}
where $\sigma_{1}=1$, $\sigma_{2}=1+T$ and $\sigma_{3}=1+ 2T$
modulo $T^2$, are all the elements of $G$. Therefore, in this case, the
system equations (\ref{defhomocruzado5})
can be written as follows:
\begin{eqnarray}\label{defhomocruzado7}
D_{\sigma_{3}}&=&D_{\sigma_{2}}+(1+T)D_{\sigma_{2}},\\
D_{\sigma_{2}}&=&-(1+T)D_{\sigma_{3}},\\
D_{\sigma_{2}}&=&D_{\sigma_{3}}+(1+2T)D_{\sigma_{3}}.
\end{eqnarray}

Therefore the solutions of the previous system are:
\[
(T^{2},(2T+2)D_{\sigma_{3}},D_{\sigma_{3}}),
\]
where $D_{\sigma_{3}}$ runs through all the polynomials of degree
less than or equal to $1$, with coefficients in ${\mathbb{F}}_{q}$.
Note that the first component of the previous vector is $T^2$ since
we know that if $f$ is a crossed homomorphism, we have $f(1)=0$.
Therefore the number of solutions of the system 
(\ref{defhomocruzado7}) is $9$. This was already obtained in
\cite[Example 7.8]{sanchez}.

Using the function $\phi$ it can be shown that
$\phi(\lambda_{T^{2}})=f_{4}$ and 
$\phi({\mathrm{C}}_{2}(\lambda_{T^{2}}))=f_{7}$. 
We want to find the rest of the $\alpha_{i}$. 
To achieve this, note that
\[
\alpha_{2}=a_{0}+a_{1}\lambda_{T^{2}}+a_{2}({\mathrm{C}}_{2}
(\lambda_{T^{2}}))^{2},
\]
where $a_{i}\in K$ for $i=0,1,2$. Now
\begin{align*}
\phi(\alpha_{2})(\sigma_{1})&= 0,\\
\phi(\alpha_{2})(\sigma_{2})&= a_{1}({\mathrm{C}}_{1+T}(\lambda_{T^{2}})
-\lambda_{T^{2}})+a_{2}(({\mathrm{C}}_{1+T}(\lambda_{T^{2}}))^{2}-
(\lambda_{T^{2}})^{2}),\\
\phi(\alpha_{2})(\sigma_{3})&= a_{1}({\mathrm{C}}_{1+2T}(\lambda_{T^{2}
})-\lambda_{T^{2}})+a_{2}(({\mathrm{C}}_{1+2T}\lambda_{T^{2}})^{2}-
(\lambda_{T^{2}})^{2}).
\end{align*}

On the other hand, we want that $\phi(\alpha_{2})=f_{2}$.
Hence, we obtain the system of equations:
\begin{align*}
a_{1}({\mathrm{C}}_{1+T}(\lambda_{T^{2}})-\lambda_{T^{2}})+a_{2}
(({\mathrm{C}}_{1+T}(\lambda_{T^{2}}))^{2}-(\lambda_{T^{2}})^{2})&=
 {\mathrm{C}}_{2T+2}(\lambda_{T^{2}}),\\
a_{1}({\mathrm{C}}_{1+2T}(\lambda_{T^{2}})-\lambda_{T^{2}})+
a_{2}(({\mathrm{C}}_{1+2T}(\lambda_{T^{2}}))^{2}-(\lambda_{T^{2}})^{2})
&= \lambda_{T^{2}}.
\end{align*}

The solutions of the system are:
\begin{align*}
{a_1}&=\dfrac{\left|\begin{array}{cc}
 {\mathrm{C}}_{2T+2}(\lambda_{T^{2}}) & 
 ({\mathrm{C}}_{1+T}(\lambda_{T^{2}}))^{2}-(\lambda_{T^{2}})^{2} \\
\lambda_{T^{2}} & ({\mathrm{C}}_{1+2T}
(\lambda_{T^{2}}))^{2}-(\lambda_{T^{2}})^{2} \\
\end{array}
\right|}{\left|\begin{array}{cc}
{\mathrm{C}}_{1+T}(\lambda_{T^{2}})-\lambda_{T^{2}} 
& ({\mathrm{C}}_{1+T}(\lambda_{T^{2}}))^{2}-(\lambda_{T^{2}})^{2} \\
{\mathrm{C}}_{1+2T}(\lambda_{T^{2}})-
\lambda_{T^{2}}& ({\mathrm{C}}_{1+2T}
(\lambda_{T^{2}}))^{2}-(\lambda_{T^{2}})^{2} \\
\end{array}
\right|},\\
{a_2}&=\dfrac{\left|\begin{array}{cc}
{\mathrm{C}}_{1+T}(\lambda_{T^{2}})-\lambda_{T^{2}} 
& {\mathrm{C}}_{2T+2}(\lambda_{T^{2}}) \\
{\mathrm{C}}_{1+2T}(\lambda_{T^{2}})-\lambda_{T^{2}}
& \lambda_{T^{2}} \\
 \end{array}
\right|}{\left|\begin{array}{cc}
{\mathrm{C}}_{1+T}(\lambda_{T^{2}})-\lambda_{T^{2}}
& ({\mathrm{C}}_{1+T}(\lambda_{T^{2}}))^{2}-
(\lambda_{T^{2}})^{2} \\
 {\mathrm{C}}_{1+2T}(\lambda_{T^{2}})-\lambda_{T^{2}}& 
 ({\mathrm{C}}_{1+2T}(\lambda_{T^{2}}))^{2}-(\lambda_{T^{2}})^{2} \\
\end{array}
\right|}.
\end{align*}

Proceeding similarly, it is possible to find the rest of the
 $\alpha_{i}$.
\end{ejemplo}

The method of Example \ref{E9.1} can be used to other situations,
more precisely, to Galois extensions $L/K$. In some cases it is
possible to describe the lattice of radical extensions.

\begin{ejemplo}
Let $q=p=3$, $P=T$ and $n=3$.The multiplication table of the
group $G$ is:
\begin{center}
\begin{tabular}{l | c r r r r r r r r}
 & $\sigma_{1}$ & $\sigma_{2}$ & $\sigma_{3}$ 
 & $\sigma_{4}$ & $\sigma_{5}$ & $\sigma_{6}$ 
 & $\sigma_{7}$ & $\sigma_{8}$ & $\sigma_{9}$\\
\hline
$\sigma_{1}$ & $\sigma_{1}$ & $\sigma_{2}$ 
& $\sigma_{3}$ & $\sigma_{4}$ & $\sigma_{5}$ 
& $\sigma_{6}$ & $\sigma_{7}$ & $\sigma_{8}$ 
& $\sigma_{9}$\\
$\sigma_{2}$ & $\sigma_{2}$ & $\sigma_{3}$ 
& $\sigma_{1}$ & $\sigma_{5}$ & $\sigma_{9}$ 
& $\sigma_{7}$ & $\sigma_{8}$ & $\sigma_{6}$ 
& $\sigma_{4}$\\
$\sigma_{3}$ & $\sigma_{3}$ & $\sigma_{1}$ 
& $\sigma_{2}$ & $\sigma_{9}$ & $\sigma_{4}$ 
& $\sigma_{8}$ & $\sigma_{6}$ & $\sigma_{7}$ 
& $\sigma_{5}$\\
$\sigma_{4}$ & $\sigma_{4}$ & $\sigma_{5}$ 
& $\sigma_{9}$ & $\sigma_{7}$ & $\sigma_{8}$ 
& $\sigma_{3}$ & $\sigma_{1}$ & $\sigma_{2}$ 
& $\sigma_{6}$\\
$\sigma_{5}$ & $\sigma_{5}$ & $\sigma_{9}$ 
& $\sigma_{4}$ & $\sigma_{8}$ & $\sigma_{6}$ 
& $\sigma_{1}$ & $\sigma_{2}$ & $\sigma_{3}$ 
& $\sigma_{7}$\\
$\sigma_{6}$ & $\sigma_{6}$ & $\sigma_{7}$ 
& $\sigma_{8}$ & $\sigma_{3}$ & $\sigma_{1}$ 
& $\sigma_{5}$ & $\sigma_{9}$ & $\sigma_{4}$ 
& $\sigma_{2}$\\
$\sigma_{7}$ & $\sigma_{7}$ & $\sigma_{8}$ 
& $\sigma_{6}$ & $\sigma_{1}$ & $\sigma_{2}$ 
& $\sigma_{9}$ & $\sigma_{4}$ & $\sigma_{5}$ 
& $\sigma_{3}$\\
$\sigma_{8}$ & $\sigma_{8}$ & $\sigma_{6}$ 
& $\sigma_{7}$ & $\sigma_{2}$ & $\sigma_{3}$ 
& $\sigma_{4}$ & $\sigma_{5}$ & $\sigma_{9}$ 
& $\sigma_{1}$\\
$\sigma_{9}$ & $\sigma_{9}$ & $\sigma_{4}$ 
& $\sigma_{5}$ & $\sigma_{6}$ & $\sigma_{7}$ 
& $\sigma_{2}$ & $\sigma_{3}$ & $\sigma_{1}$ 
& $\sigma_{8}$\\
\end{tabular}
\end{center}
where $\sigma_{1}=1$, $\sigma_{2}= 2T^{2}+T+1$, 
$\sigma_{3}=2T^{2}+2T+1$, $\sigma_{4}= T^{2}+T+1$, 
$\sigma_{5}= T^{2}+2T+1$, $\sigma_{6}= T+1$, 
$\sigma_{7}=2T+1$ , $\sigma_{8}=T^{2}+1$ 
and $\sigma_{9}=2T^{2}+1$.

Using Matlab all the solutions were obtained and
\[
|\cog(L/K)|=27=3^{3}.
\]

A refinement of \cite[Proposition 7.2]{sanchez} allows us
to find all the subextensions $L^{\prime}/K$ of $L/K$
that are simple radical, namely:
\begin{gather*}
K_{1}=L^{\{\sigma_{1},\sigma_{8},\sigma_{9}\}},\ 
K_{2}=L^{\{\sigma_{1},\sigma_{5},\sigma_{6}\}},\ 
K_{3}=L^{\{\sigma_{1},\sigma_{2},\sigma_{3}\}},\ 
\text{and}\ 
K_{4}=L^{\{\sigma_{1},\sigma_{4},\sigma_{7}\}}.
\end{gather*}
\end{ejemplo}

\end{document}